\newcommand{\bbbn}{\mathbb{N}}
\newcommand{\bbbr}{\mathbb{R}}
\newcommand{\Idx}{\mathcal{I}}
\newcommand{\ctI}{\mathcal{T}_{\Idx}}
\newcommand{\lfI}{\mathcal{L}_{\Idx}}
\newcommand{\ctII}{\mathcal{T}_{\Idx\times\Idx}}
\newcommand{\lfII}{\mathcal{L}_{\Idx\times\Idx}}
\newcommand{\lfiII}{\mathcal{L}_{\Idx\times\Idx}^-}
\newcommand{\lfaII}{\mathcal{L}_{\Idx\times\Idx}^+}
\newcommand{\ctIII}{\mathcal{T}_{\Idx\times\Idx\times\Idx}}
\newcommand{\sons}{\mathop{\operatorname{sons}}\nolimits}
\newcommand{\desc}{\mathop{\operatorname{desc}}\nolimits}
\newcommand{\level}{\mathop{\operatorname{level}}\nolimits}
\newcommand{\rnk}{\mathop{\operatorname{rank}}\nolimits}
\newcommand{\Csp}{C_\text{sp}}
\newcommand{\Cad}{C_\text{ad}}
\newcommand{\Cmg}{C_\text{mg}}
\newcommand{\Cup}{C_\text{up}}
\newcommand{\Cmm}{C_\text{mm}}
\newcommand{\Wev}{W_\text{ev}}
\newcommand{\Wup}{W_\text{up}}
\newcommand{\Wmm}{W_\text{mm}}
\newcommand{\Wls}{W_\text{ls}}
\newcommand{\Wrs}{W_\text{rs}}
\newcommand{\Wll}{W_\text{ll}}
\newcommand{\Wrl}{W_\text{rl}}
\newcommand{\Wlr}{W_\text{lr}}
\newcommand{\Wrr}{W_\text{rr}}
\newcommand{\Wdc}{W_\text{dc}}
\newcommand{\Wli}{W_\text{li}}
\newcommand{\Wri}{W_\text{ri}}
\newcommand{\Win}{W_\text{in}}
\newtheorem{theorem}{Theorem}
\newtheorem{lemma}[theorem]{Lemma}
\newtheorem{definition}[theorem]{Definition}
\newtheorem{remark}[theorem]{Remark}
\newenvironment{proof}{\emph{Proof.}}{$\Box$}
\title{Complexity estimates for triangular hierarchical matrix
  algorithms}
\author{Steffen B\"orm}
\begin{document}
\maketitle
\begin{abstract}
  Triangular factorizations are an important tool for solving
  integral equations and partial differential equations with
  hierarchical matrices ($\mathcal{H}$-ma\-trices).

  Experiments show that using an $\mathcal{H}$-matrix LR factorization
  to solve a system of linear questions is superior to direct inversion
  both with respect to accuracy and efficiency, but so far theoretical
  estimates quantifying these advantages were missing.

  Due to a lack of symmetry in $\mathcal{H}$-matrix algorithms, we cannot
  hope to prove that the LR factorization takes one third of the operations
  of the inversion or the matrix multiplication, as in standard linear algebra.
  We can, however, prove that the LR factorization together with two
  other operations of similar complexity, i.e., the inversion and
  multiplication of triangular matrices, requires not more operations
  than the matrix multiplication.

  We can complete the estimates by proving an improved upper bound for the
  complexity of the matrix multiplication, designed for recently introduced
  variants of classical $\mathcal{H}$-matrices.
\end{abstract}

\section{Introduction}

Hierarchical matrices \cite{HA99,GRHA02}, $\mathcal{H}$-matrices for
short, can be used to approximate certain densely populated matrices
arising in the context of integral equations
\cite{BE00a,BOGR02,BOGR04} and elliptic partial
differential equations \cite{BEHA03,FAMEPR13} in
linear-polylogarithmic complexity.
Compared to other methods like fast multipole expansions
\cite{GRRO87,GRRO97} or wavelet approximations \cite{BECORO91,CODADE01},
it is possible to approximate arithmetic operations like the
matrix multiplication, inversion, or triangular factorization
for $\mathcal{H}$-matrices in linear-polylogarithmic complexity.
This property makes $\mathcal{H}$-matrices attractive for
a variety of applications, starting with solving partial
differential equations \cite{BEHA03,FAMEPR13} and
integral equations \cite{FAMEPR13b}, up to dealing with matrix equations
\cite{GR01a,GRHAKH02,BABE06,BA08} and evaluating matrix functions
\cite{GAHAKH00,GAHAKH02}.

Already the first articles on $\mathcal{H}$-matrix techniques
introduced an algorithm for approximating the inverse of an
$\mathcal{H}$-matrix by recursively applying a block representation
\cite{HA99,GRHA02}.
This approach works well, but is quite time-consuming.

The situation improved significantly when Lintner and Grasedyck
introduced an efficient algorithm for approximating the LR factorization
of an $\mathcal{H}$-matrix \cite{LI04,GRKRLE05a}, reducing the
computational work by a large factor and simultaneously
considerably improving the accuracy.
It is fairly easy to prove that the $\mathcal{H}$-LR or
$\mathcal{H}$-Cholesky factorization requires less computational
work than the $\mathcal{H}$-matrix multiplication or inversion,
and for the latter operations linear-polylogarithmic complexity
bounds have been known for years \cite{GRHA02}.

For dense $n\times n$ matrices in standard array representation, we know
that a straightforward implementation of the LR factorization
requires
\begin{subequations}\label{eq:dense_operations}
\begin{align}
  \sum_{\ell=1}^n (n-\ell) + 2 (n-\ell)^2
  &= \frac{n}{6} (4 n^2 - 3 n - 1) \quad\text{operations}
     \label{eq:dense_lr},
\intertext{i.e., approximately one third of the $2n^3$ operations
required for the matrix multiplication.
We would like to prove a similar result for $\mathcal{H}$-matrices,
but this is generally impossible due to the additional steps
required to obtain low-rank approximations of intermediate
results.
We can circumvent this problem by considering two further
operations:
the inversion of a triangular matrix requires}
  \sum_{\ell=1}^n 1 + (n-\ell) + (n-\ell)^2
  &= \frac{n}{6}(2 n^2 + 4) \quad\text{operations}
     \label{eq:dense_linv},
\intertext{i.e., approximately one sixth of the operations
required for the matrix multiplication, while multiplying an
upper and a unit-diagonal lower triangular matrix takes}
  \sum_{\ell=1}^n (n-\ell) + 2 (n-\ell)^2
  &= \frac{n}{6}(4 n^2 - 3 n - 1) \quad\text{operations.}
     \label{eq:dense_rl}
\end{align}
This means that computing the LR factorization $G=LR$ of a
matrix $G\in\bbbr^{n\times n}$, inverting $L$ and $R$, and
multiplying them to obtain the inverse $G^{-1} = R^{-1} L^{-1}$
requires a total of
\begin{equation*}
  \frac{n}{6} (12 n^2 - 6 n + 6) \leq 2 n^3
  \quad\text{operations},
\end{equation*}
i.e., the three operations \emph{together} require approximately
as much work as the matrix multiplication.
\end{subequations}

Our first goal is to prove that this statement also holds for
$\mathcal{H}$-matrices with (almost) \emph{arbitrary} block trees,
i.e., that the operations appearing in the factorization,
triangular inversion, and multiplication fit together like the parts
of a jigsaw puzzle corresponding to the $\mathcal{H}$-matrix
multiplication.
Incidentally, combining the three algorithms also allows us to
compute the approximate $\mathcal{H}$-matrix inverse in place
without the need for a separate output matrix.

In order to complete the complexity analysis, we also have to
show that the $\mathcal{H}$-matrix multiplication has linear-polylogarithmic
complexity.
This is already known \cite{GRHA02,HA15}, but can find an
improved estimate that reduces the impact of the sparsity
of the block tree and therefore may be interesting for recently
developed versions of $\mathcal{H}$-matrices, e.g., MBLR-matrices,
that use a denser block tree to improve the potential for parallelization
\cite{AMASBOBUEXWE15,AMBUEXMA17}.

\section{Definitions}

The blockwise low-rank structure of $\mathcal{H}$-matrices
$G\in\bbbr^{\Idx\times\Idx}$ is
conveniently described by the \emph{cluster tree}, a hierarchical
subdivision of an index set $\Idx$ into disjoint subsets $\hat t$
called \emph{clusters}, and the \emph{block tree}, a hierarchical
subdivision of a product index set $\Idx\times\Idx$ into subsets
$\hat t\times\hat s$ constructed from these clusters.

%
%
\begin{definition}[Cluster tree]
\label{de:cluster_tree}
Let $\Idx$ be a finite index set.
A tree $\ctI$ is a \emph{cluster tree} for this index set if
each node $t\in\ctI$ is labeled with a subset $\hat t\subseteq\Idx$
and if these subsets satisfy the following conditions:
\begin{itemize}
  \item The root of $\ctI$ is labeled with $\Idx$.
  \item If $t\in\ctI$ has sons, the label of $t$ is the union
    of the labels of the sons, i.e.,
    $\hat t = \bigcup_{t'\in\sons(t)} \hat t'$.
  \item The labels of sons of $t\in\ctI$ are disjoint, i.e.,
    for $t\in\ctI$ and $t_1,t_2\in\sons(t)$ with $t_1\neq t_2$,
    we have $\hat t_1\cap\hat t_2=\emptyset$.
\end{itemize}
The nodes of a cluster tree are called \emph{clusters}.
The set of leaves is denoted by $\lfI$.
\end{definition}

%
%
\begin{definition}[Block tree]
\label{de:block_tree}
Let $\ctI$ be a cluster tree for an index set $\Idx$.
A tree $\ctII$ is a \emph{block tree} for this cluster tree if
\begin{itemize}
  \item For each node $b\in\ctII$ there are cluster $t,s\in\ctI$
    with $b=(t,s)$.
    $t$ is called the \emph{row cluster} for $b$ and
    $s$ the \emph{column cluster}.
  \item If $r\in\ctI$ is the root of $\ctI$, the root of $\ctII$
    is $b=(r,r)$.
  \item If $b=(t,s)\in\ctII$ has sons, they are pairs of the
    sons of $t$ and $s$, i.e., $\sons(b)=\sons(t)\times\sons(s)$.
\end{itemize}
The nodes of a block tree are called \emph{blocks}.
The set of leaves is denoted by $\lfII$.
\end{definition}

We can see that the labels of the leaves of a cluster tree $\ctI$
correspond to a disjoint partition of the index set $\Idx$ and
that the sets $\hat t\times\hat s$ with $(t,s)\in\lfII$ correspond
to a disjoint partition of the index set $\Idx\times\Idx$, i.e.,
of a decomposition of a matrix $G\in\bbbr^{\Idx\times\Idx}$ into
submatrices.

Among the leaf blocks $\lfII$, we identify those that correspond
to submatrices that we expect to have low numerical rank.
These blocks are called \emph{admissible} and collected in the
set $\lfaII\subseteq\lfII$.
The remaining leaves are called \emph{inadmissible} and
collected in the set $\lfiII := \lfII \setminus \lfaII$.

We note that there are efficient algorithms at our disposal
for constructing cluster and block trees for various
applications \cite{GRHA02,HA15}.

%
%
\begin{definition}[Hierarchical matrix]
Let $\ctI$ be a cluster tree for an index set $\Idx$, and
let $\ctII$ be a block tree for $\ctI$ with sets $\lfaII$ and
$\lfiII$ of admissible and inadmissible leaves.
A matrix $G\in\bbbr^{\Idx\times\Idx}$ is a \emph{hierarchical
matrix} (or short \emph{$\mathcal{H}$-matrix}) of local rank
$k\in\bbbn$ if
\begin{align*}
  \rnk(G|_{\hat t\times\hat s}) &\leq k &
  &\text{ holds for all } b=(t,s)\in\lfaII,
\end{align*}
i.e., if all admissible leaves have a rank smaller or equal
to $k$.
\end{definition}

If $G$ is a hierarchical matrix, we can find matrices
$A_{ts}\in\bbbr^{\hat t\times k}$ and $B_{ts}\in\bbbr^{\hat s\times k}$
for every admissible leaf $b=(t,s)\in\lfaII$ such that
\begin{equation}\label{eq:lowrank_factorized}
  G|_{\hat t\times\hat s} = A_{ts} B_{ts}^*.
\end{equation}
Here we use the shorthand notation $\bbbr^{\hat t\times k}$ for
the set $\bbbr^{\hat t\times[1:k]}$ of matrices with row
indices in $\hat t\subseteq\Idx$ and column indices in $[1:k]$.

For inadmissible leaves $b=(t,s)\in\lfiII$, we store the
\emph{nearfield matrices} $N_{ts}\in\bbbr^{\hat t\times\hat s}$
directly.
The matrix families $(A_{ts})_{(t,s)\in\lfaII}$,
$(B_{ts})_{(t,s)\in\lfaII}$, and $(N_{ts})_{(t,s)\in\lfiII}$
together represent an $\mathcal{H}$-matrix
$G\in\bbbr^{\Idx\times\Idx}$.

\section{\texorpdfstring{Basic $\mathcal{H}$-matrix operations}%
                        {Basic H-matrix operations}}

Before we consider algorithms for triangular $\mathcal{H}$-matrices,
we have to recall the algorithms they are based on:
the $\mathcal{H}$-matrix-vector multiplication, the
$\mathcal{H}$-matrix low-rank update, and the $\mathcal{H}$-matrix
multiplication.

The multiplication an $\mathcal{H}$-matrix $G\in\bbbr^{\Idx\times\Idx}$
with multiple vectors collected in the columns of a matrix
$Y\in\bbbr^{\Idx\times\ell}$ can be split into updates
\begin{align}\label{eq:addeval}
  X|_{\hat t} &\gets X|_{\hat t} + \alpha G|_{\hat t\times\hat s} Y|_{\hat s} &
  &\text{ for } b=(t,s)\in\ctII,
\end{align}
where $X|_{\hat t} := X|_{\hat t\times \ell}$ denotes the restriction of $X$
to the row indices in $\hat t$ and $\alpha\in\bbbr$ is a scaling
factor.
For inadmissible leaves, the update can be carried out directly,
taking care to minimize the computational work by ensuring that the
scaling by $\alpha$ is applied to $Y|_{\hat s}$ if $|\hat s|\leq|\hat t|$
and to the product $G|_{\hat t\times\hat s} Y|_{\hat s}$ otherwise.

For admissible leaves we have $G|_{\hat t\times\hat s}
= A_{ts} B_{ts}^*$ and can first compute the intermediate
matrix $\widehat{Z}_{ts} := \alpha B_{ts}^* Y|_{\hat s}\in\bbbr^{k\times\ell}$
and then add $A_{ts} \widehat{Z}_{ts}$ to the output, i.e.,
\begin{align*}
  X|_{\hat t} &\gets X|_{\hat t} + \alpha A_{ts} B_{ts}^* Y|_{\hat s}
    = X|_{\hat t} + A_{ts} \widehat{Z}_{ts} &
  &\text{ for all } b=(t,s)\in\lfaII.
\end{align*}
For non-leaf blocks $b=(t,s)\in\ctII\setminus\lfII$, we recursively
consider sons until we arrive at leaves.
In total, the number of operations for (\ref{eq:addeval}) is equal to
\begin{align}\label{eq:Wev}
  \Wev(t,s,\ell)
  &:= \begin{cases}
        2 \ell k (|\hat t| + |\hat s|)
        &\text{ if } (t,s)\in\lfaII,\\
        \ell (2 |\hat t|\,|\hat s| + \min\{|\hat t|,|\hat s|\})
        &\text{ if } (t,s)\in\lfiII,\\
        \sum_{\substack{t'\in\sons(t)\\s'\in\sons(s)}} \Wev(t',s',\ell)
        &\text{ otherwise}
      \end{cases}
\end{align}
for all $b=(t,s)\in\ctII$.
The multiplication by the transposed $\mathcal{H}$-matrix $G^*$,
i.e., updates of the form
\begin{align}\label{eq:addevaltrans}
  X|_{\hat s} &\gets X|_{\hat s} + \alpha G|_{\hat t\times\hat s}^* Y|_{\hat t} &
  &\text{ for } b=(t,s)\in\ctII,
\end{align}
can be handled simultaneously and also requires $\Wev(t,s,\ell)$ operations.
In the following, we assume that procedures ``addeval'' and
``addevaltrans'' for the operations (\ref{eq:addeval}) and
(\ref{eq:addevaltrans}) are at our disposal.

The low-rank update of an $\mathcal{H}$-matrix $G$, i.e., the
approximation of
\begin{equation}\label{eq:update}
  G|_{\hat t\times\hat s}
  \gets G|_{\hat t\times\hat s} + A B^*
\end{equation}
for a block $(t,s)\in\ctII$, $A\in\bbbr^{\hat t\times\ell}$ and
$B\in\bbbr^{\hat s\times\ell}$ is realized by recursively
moving to the leaves of the block tree and performing a direct
update for inadmissible leaves and a truncated update for
admissible ones:
if $(t,s)\in\lfaII$, we have $G|_{\hat t\times\hat s} = A_{ts} B_{ts}^*$
and approximate
\begin{align*}
  G|_{\hat t\times\hat s}
  &\gets G|_{\hat t\times\hat s} + A B^*
   = \widehat{A} \widehat{B}^*, &
  \widehat{A} &:= \begin{pmatrix} A_{ts} & A \end{pmatrix}, &
  \widehat{B} &:= \begin{pmatrix} B_{ts} & B \end{pmatrix}
\end{align*}
by computing the thin Householder factorization
$\widehat{B} = Q R$, a low-rank approximation $C \widehat{D}^*$ of
$\widehat{A} R^*$, so that $D:=Q \widehat{D}$ yields a low-rank
approximation $C D^* = C \widehat{D}^* Q^* \approx \widehat{A} R^* Q^*
= \widehat{A} \widehat{B}^*$.
Assuming that the Householder factorization and the low-rank
approximation of $n\times m$ matrices require $\mathcal{O}(n m \min\{n,m\})$
operations, we find a constant $\Cad$ such that the number
of operations for a low-rank update (\ref{eq:update}) is bounded by
\begin{align}\label{eq:Wup}
  \Wup(t,s,\ell) &:= \begin{cases}
    \Cad (k+\ell)^2 (|\hat t|+|\hat s|)
    &\text{ if } (t,s)\in\lfaII,\\
    2 \ell |\hat t|\,|\hat s|
    &\text{ if } (t,s)\in\lfiII,\\
    \sum_{\substack{t'\in\sons(t)\\s'\in\sons(s)}}
    \Wup(t',s',\ell)
    &\text{ otherwise}
  \end{cases}
\end{align}
for all $b=(t,s)\in\ctII$.
In the following, we assume that a procedure ``update'' for
approximating the operation (\ref{eq:update}) in this way is available. 

During the course of the $\mathcal{H}$-matrix multiplication, we may
have to split a low-rank matrix into submatrices, perform updates to
these submatrices, and then merge them into a larger low-rank matrix.
This task can be handled essentially like the update, but we have to take
special care in case that the number of submatrices is large.
Let $t,s\in\ctII$ with $|\sons(s)| = m\in\bbbn$ and
$\sons(s)=\{s_1,\ldots,s_m\}$.
We are looking for an approximation of the matrix
\begin{align*}
  G|_{\hat t\times\hat s}
  &= \begin{pmatrix}
       A_1 B_1^* & \cdots & A_m B_m^*
     \end{pmatrix}, &
  A_j\in\bbbr^{\hat t\times k},\ B_j\in\bbbr^{\hat s_j\times k}
    \text{ for all } i,j\in[1:m],
\end{align*}
where we assume that preliminary compression steps have ensured
$k\leq |\hat t|$.

In a first step, we compute thin Householder factorizations
$B_j = Q_j R_j$ with $R_j\in\bbbr^{k\times k}$ for all $j\in[1:m]$.
Due to our assumption and $|\hat s|=|\hat s_1|+\ldots+|\hat s_m|$,
this requires $\mathcal{O}(|\hat s| k^2)$ operations.
Now we form the reduced matrix
\begin{equation*}
  \widehat{G}
  := \begin{pmatrix}
       A_1 R_1^* & \cdots & A_m R_m^*
     \end{pmatrix} \in \bbbr^{\hat t\times (km)}.
\end{equation*}
Once we have found a rank-$k$ approximation $\widehat{A} \widehat{Q}^*
\approx \widehat{G}$ with $\widehat{A}\in\bbbr^{\hat t\times k}$
and an isometric matrix $\widehat{Q}\in\bbbr^{(mk)\times k}$, applying
the Householder reflections yields the rank-$k$ approximation
\begin{equation*}
  G|_{\hat t\times\hat s}
  = \widehat{G}
    \begin{pmatrix}
      Q_1^* & & \\
      & \ddots & \\
      & & Q_m^*
    \end{pmatrix}
  \approx \widehat{A} \widehat{Q}^*
    \begin{pmatrix}
      Q_1^* & & \\
      & \ddots & \\
      & & Q_m^*
    \end{pmatrix}
\end{equation*}
of the original matrix in $\mathcal{O}(|\hat s| k^2)$ operations.
To construct a rank-$k$ approximation of $\widehat{G}$, we can
proceed sequentially:
first we use techniques like the singular value decomposition or
rank-revealing QR factorization to obtain a rank-$k$ approximation
\begin{equation*}
  \widehat{A}_{m-1} \widehat{Q}_{m-1}^*
  \approx \begin{pmatrix}
            A_{m-1} R_{m-1}^* & A_m R_m^*
          \end{pmatrix}
\end{equation*}
with $\widehat{A}_{m-1}\in\bbbr^{\hat t\times k}$ and an isometric
matrix $\widehat{Q}_{m-1}\in\bbbr^{(2k)\times k}$.
Due to our assumptions, this task can be accomplished in
$\mathcal{O}(|\hat t| k^2)$ operations.
We find
\begin{align*}
  \widehat{G}
  &= \begin{pmatrix}
      A_1 R_1^* & \cdots & A_{m-1} R_{m-1}^* & A_m R_m^*
    \end{pmatrix}
   \approx \begin{pmatrix}
      A_1 R_1^* & \cdots & \widehat{A}_{m-1} \widehat{Q}_{m-1}^*
    \end{pmatrix}\\
  &= \begin{pmatrix}
      A_1 R_1^* & \cdots & \widehat{A}_{m-1}
    \end{pmatrix}
    \begin{pmatrix}
      I_{(m-2)k} & \\
      & \widehat{Q}_{m-1}^*
    \end{pmatrix},
\end{align*}
where $I_{(m-2)k}$ denotes the $(m-2)k$-dimensional identity matrix.
The left factor now has only $(m-1)k$ columns, and repeating the
procedure $m-2$ times yields
\begin{equation*}
  \widehat{G}
  \approx \widehat{A}_1
    \widehat{Q}_1^*
    \begin{pmatrix}
      I_k & \\
      & \widehat{Q}_2^*
    \end{pmatrix}
    \begin{pmatrix}
      I_{2k} & \\
      & \widehat{Q}_3^*
    \end{pmatrix}
    \cdots
    \begin{pmatrix}
      I_{(m-2)k} & \\
      & \widehat{Q}_{m-1}^*
    \end{pmatrix}.
\end{equation*}
Since the matrices $\widehat{Q}_1,\ldots,\widehat{Q}_{m-1}$ have
only $k$ columns and $2k$ rows by construction, we can compute
\begin{equation*}
  \widehat{Q}
  := \begin{pmatrix}
       I_{(m-2)k} & \\
       & \widehat{Q}_{m-1}
     \end{pmatrix}
     \begin{pmatrix}
       I_{(m-3)k} & \\
       & \widehat{Q}_{m-2}
     \end{pmatrix}
     \cdots
     \begin{pmatrix}
       I_k & \\
       & \widehat{Q}_2
     \end{pmatrix}
     \widehat{Q}_1
\end{equation*}
in $\mathcal{O}(k^3 (m-1))\subseteq\mathcal{O}(|\hat t| k^2 (m-1))$ operations
and find the desired low-rank approximation.
We conclude that there is a constant $\Cmg'$ such that not
more than $\Cmg' k^2 (|\hat t| (m-1) + |\hat s|)$
operations are needed to merge the row blocks.
We can apply the same procedure to merge column blocks, as well,
and see that
\begin{equation*}
  \Cmg' k^2 (|\hat t|\,|\sons(s)| + |\hat s| (2 |\sons(t)| - 1))
  \leq \Cmg k^2 \sum_{\substack{t'\in\sons(t)\\ s'\in\sons(s)}}
             |\hat t'| + |\hat s'|
\end{equation*}
operations are sufficient to merge submatrices for all the sons of
a block $(t,s)\in\ctI\times\ctI$, where $\Cmg := 2 \Cmg$.
In the following, we assume that a procedure ``merge'' for this task is
available.

Finally, the $\mathcal{H}$-matrix multiplication algorithm
carries out the approximate update
$Z|_{\hat t\times\hat r} \gets Z|_{\hat t\times\hat r}
+ \alpha X|_{\hat t\times\hat s} Y|_{\hat s\times\hat r}$ with
$(t,s)\in\ctII$ and $(s,r)\in\ctII$, again by recursively
considering sons of the blocks until one of them is a leaf,
so the product $X|_{\hat t\times\hat s} Y|_{\hat s\times\hat r}$ is
of low rank and can be computed using the functions
``addeval'' and ``addevaltrans''.
The functions ``update'' and ``merge'' can then be used to add the
result to $Z|_{\hat t\times\hat r}$, performing low-rank truncations
if necessary.
The algorithm is summarized in Figure~\ref{fi:addmul}.

%
%
\begin{figure}
  \begin{tabbing}
    \textbf{procedure} addmul($\alpha$, $t$, $s$, $r$,
       $X$, $Y$, \textbf{var} $Z$);\\
    \textbf{if} $(t,s)\in\lfaII$ \textbf{then begin}\\
    \quad\= $\widehat{A} \gets A_{X,ts}$;
      \quad $\widehat{B} \gets 0$;
      \quad addevaltrans($\bar\alpha$, $s$, $r$, $Y$, $B_{X,ts}$,
                                  $\widehat{B}$);\\
    \> update($t$, $r$, $\widehat{A}$, $\widehat{B}$, $Z$)\\
    \textbf{end else if} $(t,s)\in\lfiII$ \textbf{then begin}\\
    \> \textbf{if} $|\hat t|\leq |\hat s|$ \textbf{then begin}\\
    \> \quad\= $\widehat{A} \gets I_{\hat t\times\hat t}$;
      \quad $\widehat{B} \gets 0$;
      \quad addevaltrans($\bar\alpha$, $s$, $r$, $Y$, $N_{X,ts}^*$,
                                  $\widehat{B}$)\\
    \> \textbf{end else begin}\\
    \> \> $\widehat{A} \gets N_{X,ts}$;
      \quad $\widehat{B} \gets 0$;
      \quad addevaltrans($\bar\alpha$, $s$, $r$, $Y$, $I_{\hat s\times\hat s}$,
                                  $\widehat{B}$)\\
    \> \textbf{end};\\
    \> update($t$, $r$, $\widehat{A}$, $\widehat{B}$, $Z$)\\
    \textbf{end else if} $(s,r)\in\lfaII$ \textbf{then begin}\\
    \> $\widehat{B} \gets B_{Y,sr}$;
      \quad $\widehat{A} \gets 0$;
      \quad addeval($\alpha$, $t$, $s$, $X$, $A_{Y,sr}$,
                             $\widehat{A}$);\\
    \> update($t$, $r$, $\widehat{A}$, $\widehat{B}$, $Z$)\\
    \textbf{end else if} $(s,r)\in\lfiII$ \textbf{then begin}\\
    \> \textbf{if} $|\hat r|\leq|\hat s|$ \textbf{then begin}\\
    \> \> $\widehat{B} \gets I_{\hat r\times\hat r}$;
      \quad $\widehat{A} \gets 0$;
      \quad addeval($\alpha$, $t$, $s$, $X$, $N_{Y,sr}$,
                             $\widehat{A}$)\\
    \> \textbf{end else begin}\\
    \> \> $\widehat{B} \gets N_{Y,sr}^*$;
      \quad $\widehat{A} \gets 0$;
      \quad addeval($\alpha$, $t$, $s$, $X$, $I_{\hat s\times\hat s}$,
                             $\widehat{A}$)\\
    \> \textbf{end};\\
    \> update($t$, $r$, $\widehat{A}$, $\widehat{B}$, $Z$)\\
    \textbf{else begin}\\
    \> \textbf{for} $t'\in\sons(t)$, $s'\in\sons(s)$,
                    $r'\in\sons(r)$ \textbf{do}\\
    \> \> addmul($\alpha$, $t'$, $s'$, $r'$,
                          $X$, $Y$, $Z$);\\
    \> \textbf{if} $(t,r)\not\in\ctII\setminus\lfII$ \textbf{then}\\
    \> \> merge($t$, $r$, $Z$)\\
    \textbf{end}
  \end{tabbing}

  \caption{$\mathcal{H}$-matrix multiplication
           $Z|_{\hat t\times\hat r} \gets Z|_{\hat t\times\hat r}
            + \alpha X|_{\hat t\times\hat s} Y|_{\hat s\times\hat r}$}
  \label{fi:addmul}
\end{figure}

In order to keep the notation short, we introduce the
\emph{local rank} of leaf blocks by
\begin{align*}
  k_{ts} &:= \begin{cases}
    k &\text{ if } b=(t,s)\in\lfaII,\\
    \min\{|\hat t|,|\hat s|\} &\text{ otherwise}
  \end{cases} &
  &\text{ for all } b=(t,s)\in\lfII.
\end{align*}
We can see that the multiplication algorithm in Figure~\ref{fi:addmul}
performs matrix-vector multiplications for matrices with $k_{ts}$
columns if $(t,s)\in\lfII$ and for matrices with $k_{sr}$ columns
if $(s,r)\in\lfII$, followed by a low-rank update.

We obtain the bound
\begin{align}\label{eq:Wmm}
  \Wmm(t,s,r) &:= \begin{cases}
    \Wev(s,r,k_{ts}) + \Wup(t,r,k_{ts})
    &\text{ if } (t,s)\in\lfII,\\
    \Wev(t,s,k_{sr}) + \Wup(t,r,k_{sr})
    &\text{ if } (t,s)\not\in\lfII,(s,r)\in\lfII,\\
    \sum_{\substack{t'\in\sons(t),\\s'\in\sons(s),\\r'\in\sons(r)}}
      \Wmm(t',s',r')
    &\text{ otherwise}\\
    \quad + \Cmg k^2 (|\hat t|+|\hat r|)
  \end{cases}
\end{align}
for the computational work required by the algorithm
``addmul'' in Figure~\ref{fi:addmul} called with
$(t,s),(s,r)\in\ctII$, where we include the work for merging
submatrices in all non-leaf cases for the sake of simplicity.


\section{\texorpdfstring{Algorithms for triangular $\mathcal{H}$-matrices}
                        {Algorithms for triangular H-matrices}}

In the context of algorithms for hierarchical matrices, we assume
triangular matrices to be compatible with the structure of the
cluster tree $\ctI$, i.e., if we have two sons $t_1,t_2\in\sons(t)$
of a cluster $t\in\ctI$ and if there are indices $i\in\hat t_1$
and $j\in\hat t_2$ with $i<j$, \emph{all} indices in $\hat t_1$ are
smaller than \emph{all} indices in $\hat t_2$.
For this property, we use the shorthand $t_1<t_2$.

While this may appear to be a significant restriction at first
glance, it rarely poses problems in practice, since we can
\emph{define} the order of the indices in the index set $\Idx$
to satisfy our condition by simply choosing an arbitrary order
on the indices in leaf clusters and an arbitrary order on the
sons of non-leaf clusters.
By induction, these orders give rise to a global order on $\Idx$
satisfying our requirements.

We are mainly interested in three operations:
the construction of an LR factorization $G=LR$, i.e., the decomposition
of $G$ into a left lower triangular matrix with unit diagonal $L$ and a
right upper triangular matrix $R$, the inversion of the triangular
matrices, and the multiplication of triangular matrices.
Together, these three operations allow us to overwrite a matrix with
its inverse.

For the sake of simplicity, we assume that we are working with a
binary cluster tree $\ctI$, i.e., a cluster is either a leaf or has
exactly two sons $t_1<t_2$.
In the latter case, we can split triangular matrices into
submatrices
\begin{align}\label{eq:submatrices_LR}
  L_{\nu\mu} &:= L|_{\hat t_\nu\times\hat t_\mu}, &
  R_{\nu\mu} &:= R|_{\hat t_\nu\times\hat t_\mu} &
  &\text{ for all } \nu,\mu\in\{1,2\}
\end{align}
to obtain
\begin{align*}
  L|_{\hat t\times\hat t}
  &= \begin{pmatrix}
       L_{11} & \\
       L_{21} & L_{22}
     \end{pmatrix}, &
  R|_{\hat t\times\hat t}
  &= \begin{pmatrix}
       R_{11} & R_{12}\\
       & R_{22}
     \end{pmatrix}.
\end{align*}
We also assume that diagonal blocks, i.e., blocks of the form
$b=(t,t)$ with $t\in\ctI$, are never admissible.

%
%

\paragraph{Forward and backward substitution}
In order to use an LR factorization as a solver, we have to be
able to solve systems $LX=Y$, $RX=Y$, $XL=Y$, and $XR=Y$.
The third equation can be reduced to the second by taking the
adjoint, and the fourth equation similarly reduces to the first.
We consider the more general tasks of solving
\begin{align*}
  L|_{\hat t\times\hat t} X|_{\hat t} &= Y|_{\hat t}, &
  R|_{\hat t\times\hat t} X|_{\hat t} &= Y|_{\hat t}, &
  L|_{\hat t\times\hat t}^* X|_{\hat t} &= Y|_{\hat t}, &
  R|_{\hat t\times\hat t}^* X|_{\hat t} &= Y|_{\hat t}
\end{align*}
for an arbitrary cluster $t\in\ctI$.

We first address the case that $X$ and $Y$ are matrices in
standard representation, i.e., that no low-rank approximations
are required.

If $t$ is a leaf, $L|_{\hat t\times\hat t}$ and $R|_{\hat t\times\hat t}$
are a standard matrices and we can solve the equations by standard
forward and backward substitution.

If $t$ is not a leaf, the first equation takes the form
\begin{equation*}
  \begin{pmatrix}
    Y|_{\hat t_1}\\
    Y|_{\hat t_2}
  \end{pmatrix}
  = Y|_{\hat t}
  = L|_{\hat t\times\hat t} X|_{\hat t}
  = \begin{pmatrix}
    L_{11} & \\
    L_{21} & L_{22}
  \end{pmatrix}
  \begin{pmatrix}
    X|_{\hat t_1}\\
    X|_{\hat t_2}
  \end{pmatrix}
  = \begin{pmatrix}
    L_{11} X|_{\hat t_1}\\
    L_{21} X|_{\hat t_1} + L_{22} X|_{\hat t_2}
  \end{pmatrix},
\end{equation*}
so we can solve $L_{11} X|_{\hat t_1} = Y|_{\hat t_1}$ by recursion,
overwrite $Y|_{\hat t_2}$ by $\widetilde{Y}_2 := Y|_{\hat t_2}
- L_{21} X|_{\hat t_1}$ using the algorithm ``addeval'', and
solve $L_{22} X|_{\hat t_2} = \widetilde{Y}_2$ by recursion.

The second equation takes the form
\begin{equation*}
  \begin{pmatrix}
    Y|_{\hat t_1}\\
    Y|_{\hat t_2}
  \end{pmatrix}
  = Y|_{\hat t}
  = R|_{\hat t\times\hat t} X|_{\hat t}
  = \begin{pmatrix}
    R_{11} & R_{12}\\
    & R_{22}
  \end{pmatrix}
  \begin{pmatrix}
    X|_{\hat t_1}\\
    X|_{\hat t_2}
  \end{pmatrix}
  = \begin{pmatrix}
    R_{11} X|_{\hat t_1} + R_{12} X|_{\hat t_2}\\
    R_{22} X|_{\hat t_2}
  \end{pmatrix},
\end{equation*}
so we can solve $R_{22} X|_{\hat t_2} = Y|_{\hat t_2}$ by recursion,
overwrite $Y|_{\hat t_1}$ by $\widetilde{Y}_1 := Y|_{\hat t_1}
- R_{12} X|_{\hat t_2}$ using the algorithm ``addeval'' again,
and solve $R_{11} X|_{\hat t_1} = \widetilde{Y}_1$ by recursion.
The algorithms are summarized in Figure~\ref{fi:lsolve}.
Counterparts ``lsolvetrans'' and ``rsolvetrans''
for the adjoint matrices $L^*$ and $R^*$ can be defined in a
similar fashion using ``addevaltrans'' instead of ``addeval''.

%
%
\begin{figure}
  \begin{minipage}{0.45\textwidth}
  \begin{tabbing}
    \textbf{procedure} lsolve($t$, $L$,
       \textbf{var} $Y$, $X$);\\
    \textbf{if} $\sons(t)=\emptyset$ \textbf{then}\\
    \quad\= Solve $L|_{\hat t\times\hat t} X|_{\hat t} = Y|_{\hat t}$
        directly\\
    \textbf{else begin}\\
    \> lsolve($t_1$, $L$, $Y$, $X$);\\
    \> addeval($-1$, $t_2$, $t_1$, $L$, $Y$, $X$);\\
    \> lsolve($t_2$, $L$, $Y$, $X$)\\
    \textbf{end}
  \end{tabbing}
  \hfill\strut
  \end{minipage}%
  \quad%
  \begin{minipage}{0.45\textwidth}
  \begin{tabbing}
    \textbf{procedure} lsolvetrans($t$, $L$,
       \textbf{var} $Y$, $X$);\\
    \textbf{if} $\sons(t)=\emptyset$ \textbf{then}\\
    \quad\= Solve $L|_{\hat t\times\hat t}^* X|_{\hat t} = Y|_{\hat t}$
        directly\\
    \textbf{else begin}\\
    \> lsolvetrans($t_2$, $L$, $Y$, $X$);\\
    \> addevaltrans($-1$, $t_2$, $t_1$, $L$, $Y$, $X$);\\
    \> lsolvetrans($t_1$, $L$, $Y$, $X$)\\
    \textbf{end}
  \end{tabbing}
  \hfill\strut
  \end{minipage}
  \vskip-\baselineskip

  \medskip
  
  \begin{minipage}{0.45\textwidth}
  \begin{tabbing}
    \textbf{procedure} rsolve($t$, $R$,
       \textbf{var} $Y$, $X$);\\
    \textbf{if} $\sons(t)=\emptyset$ \textbf{then}\\
    \quad\= Solve $R|_{\hat t\times\hat t} X|_{\hat t} = Y|_{\hat t}$
        directly\\
    \textbf{else begin}\\
    \> rsolve($t_2$, $R$, $Y$, $X$);\\
    \> addeval($-1$, $t_1$, $t_2$, $R$, $Y$, $X$);\\
    \> rsolve($t_1$, $R$, $Y$, $X$)\\
    \textbf{end}
  \end{tabbing}
  \hfill\strut
  \end{minipage}%
  \quad%
  \begin{minipage}{0.45\textwidth}
  \begin{tabbing}
    \textbf{procedure} rsolvetrans($t$, $R$,
       \textbf{var} $Y$, $X$);\\
    \textbf{if} $\sons(t)=\emptyset$ \textbf{then}\\
    \quad\= Solve $R|_{\hat t\times\hat t}^* X|_{\hat t} = Y|_{\hat t}$
        directly\\
    \textbf{else begin}\\
    \> rsolvetrans($t_1$, $R$, $Y$, $X$);\\
    \> addevaltrans($-1$, $t_1$, $t_2$, $R$, $Y$, $X$);\\
    \> rsolvetrans($t_2$, $R$, $Y$, $X$)\\
    \textbf{end}
  \end{tabbing}
  \hfill\strut
  \end{minipage}
  \vskip-\baselineskip
  \caption{Triangular solves
           $L|_{\hat t\times\hat t} X|_{\hat t} = Y|_{\hat t}$,
           $L|_{\hat t\times\hat t}^* X|_{\hat t} = Y|_{\hat t}$,
           $R|_{\hat t\times\hat t} X|_{\hat t} = Y|_{\hat t}$,
           $R|_{\hat t\times\hat t}^* X|_{\hat t} = Y|_{\hat t}$.}
  \label{fi:lsolve}
\end{figure}

In order to construct the LR factorization, we will also have
to solve the systems $LX=Y$ and $XR=Y$ with $\mathcal{H}$-matrices
$X$ and $Y$, and this requires some modifications to the
algorithms:
we consider the systems
\begin{align*}
  L|_{\hat t\times\hat t} X|_{\hat t\times\hat s}
  &= Y|_{\hat t\times\hat s}, &
  R|_{\hat t\times\hat t} X|_{\hat t\times\hat s}
  &= Y|_{\hat t\times\hat s}
\end{align*}
for blocks $(t,s)\in\ctII$.
On one hand, we can take advantage of the low-rank structure
if $(t,s)\in\lfaII$ holds, i.e., if $(t,s)$ is an admissible
leaf.
In this case, we have $Y|_{\hat t\times\hat s} = A_{Y,ts} B_{Y,ts}^*$
with $A_{Y,ts}\in\bbbr^{\hat t\times k}$ and
$B_{Y,ts}\in\bbbr^{\hat s\times k}$, and with the solution
$A_{X,ts}\in\bbbr^{\hat t\times k}$ of the linear system
\begin{equation*}
  L|_{\hat t\times\hat t} A_{X,ts} = A_{Y,ts},
\end{equation*}
we find that $X|_{\hat t\times\hat s} := A_{X,ts} B_{Y,ts}^*$ solves
\begin{equation*}
  L|_{\hat t\times\hat t} X|_{\hat t\times\hat s}
  = L|_{\hat t\times\hat t} A_{X,ts} B_{Y,ts}^*
  = A_{Y,ts} B_{Y,ts}^*
  = Y|_{\hat t\times\hat s}.
\end{equation*}
This property allows us to handle admissible blocks very
efficiently.

On the other hand, we cannot expect to be able to perform
the update $\widetilde{Y}_2 = Y|_{\hat t_2} - L_{21} X|_{\hat t_1}$
exactly, since we want to preserve the $\mathcal{H}$-matrix
structure of $Y$, so we have to use ``addmul'' instead of
``addeval'', approximating the intermediate result with the
given accuracy.

In order to keep the implementation simple, it also makes sense
to follow the structure of the block tree:
if we switch to the sons of $t$, we should also switch to the
sons of $s$, if it still has sons.
The resulting algorithms are summarized in Figure~\ref{fi:llsolve}.

%
%
\begin{figure}
  \begin{minipage}{0.45\textwidth}
  \begin{tabbing}
    \textbf{procedure} llsolve($t$, $s$, $L$,
    \textbf{var} $Y$, $X$);\\
    \textbf{if} $(t,s)\in\lfaII$ \textbf{then begin}\\
    \quad\= lsolve($t$, $L$, $A_{Y,ts}$, $A_{X,ts}$);\\
    \> $B_{X,ts} \gets B_{Y,ts}$\\
    \textbf{end else if} $(t,s)\in\lfiII$ \textbf{then}\\
    \> lsolve($t$, $L$, $N_{Y,ts}$, $N_{X,ts}$)\\
    \textbf{else for} $s'\in\sons(s)$ \textbf{ do begin}\\
    \> llsolve($t_1$, $s'$, $L$, $Y$, $X$);\\
    \> addmul($-1$, $t_2$, $t_1$, $s'$, $L$, $X$, $Y$);\\
    \> llsolve($t_2$, $s'$, $L$, $Y$, $X$)\\
    \textbf{end}
  \end{tabbing}
  \hfill\strut
  \end{minipage}%
  \quad%
  \begin{minipage}{0.45\textwidth}
  \begin{tabbing}
    \textbf{procedure} rlsolve($t$, $s$, $R$,
    \textbf{var} $Y$, $X$);\\
    \textbf{if} $(t,s)\in\lfaII$ \textbf{then begin}\\
    \quad\= rsolve($t$, $R$, $A_{Y,ts}$, $A_{X,ts}$);\\
    \> $B_{X,ts} \gets B_{Y,ts}$\\
    \textbf{end else if} $(t,s)\in\lfiII$ \textbf{then}\\
    \> rsolve($t$, $R$, $N_{Y,ts}$, $N_{X,ts}$)\\
    \textbf{else for} $s'\in\sons(s)$ \textbf{ do begin}\\
    \> rlsolve($t_2$, $s'$, $R$, $Y$, $X$);\\
    \> addmul($-1$, $t_1$, $t_2$, $s'$, $R$, $X$, $Y$);\\
    \> rlsolve($t_1$, $s'$, $R$, $Y$, $X$)\\
    \textbf{end}
  \end{tabbing}
  \hfill\strut
  \end{minipage}
  \vskip-\baselineskip
  \caption{Solving $L|_{\hat t\times\hat t} X|_{\hat t\times\hat s}
            = Y|_{\hat t\times\hat s}$
           and $R|_{\hat t\times\hat t} X|_{\hat t\times\hat s}
            = Y|_{\hat t\times\hat s}$ for $\mathcal{H}$-matrices $X$ and $Y$}
  \label{fi:llsolve}
\end{figure}

We also require counterparts for the systems
\begin{align*}
  X|_{\hat s\times\hat t} L|_{\hat t\times\hat t}
  &= Y|_{\hat s\times\hat t}, &
  X|_{\hat s\times\hat t} R|_{\hat t\times\hat t}
  &= Y|_{\hat s\times\hat t}
\end{align*}
for blocks $(s,t)\in\ctII$.
These can be constructed along the same lines as before and
are summarized in Figure~\ref{fi:lrsolve}.

%
%
\begin{figure}
  \begin{minipage}{0.45\textwidth}
  \begin{tabbing}
    \textbf{procedure} lrsolve($s$, $t$, $L$,
    \textbf{var} $Y$, $X$);\\
    \textbf{if} $(s,t)\in\lfaII$ \textbf{then begin}\\
    \quad\= lsolvetrans($t$, $L$, $B_{Y,st}$, $B_{X,st}$);\\
    \> $A_{X,st} \gets A_{Y,st}$\\
    \textbf{end else if} $(s,t)\in\lfiII$ \textbf{then}\\
    \> lsolvetrans($t$, $L$, $N_{Y,st}^*$, $N_{X,st}^*$)\\
    \textbf{else for} $s'\in\sons(s)$ \textbf{ do begin}\\
    \> lrsolve($s'$, $t_2$, $L$, $Y$, $X$);\\
    \> addmul($-1$, $s'$, $t_2$, $t_1$, $X$, $L$, $Y$);\\
    \> lrsolve($s'$, $t_1$, $L$, $Y$, $X$)\\
    \textbf{end}
  \end{tabbing}
  \hfill\strut
  \end{minipage}%
  \quad%
  \begin{minipage}{0.45\textwidth}
  \begin{tabbing}
    \textbf{procedure} rrsolve($s$, $t$, $R$,
    \textbf{var} $Y$, $X$);\\
    \textbf{if} $(s,t)\in\lfaII$ \textbf{then begin}\\
    \quad\= rsolvetrans($t$, $R$, $B_{Y,st}$, $B_{X,st}$);\\
    \> $A_{X,st} \gets A_{Y,st}$\\
    \textbf{end else if} $(s,t)\in\lfiII$ \textbf{then}\\
    \> rsolvetrans($t$, $R$, $N_{Y,st}^*$, $N_{X,st}^*$)\\
    \textbf{else for} $s'\in\sons(s)$ \textbf{ do begin}\\
    \> rrsolve($s'$, $t_1$, $R$, $Y$, $X$);\\
    \> addmul($-1$, $s'$, $t_1$, $t_2$, $X$, $R$, $Y$);\\
    \> rrsolve($s'$, $t_2$, $R$, $Y$, $X$)\\
    \textbf{end}
  \end{tabbing}
  \hfill\strut
  \end{minipage}
  \vskip-\baselineskip

  \caption{Solving $X|_{\hat s\times\hat t} L|_{\hat t\times\hat t} 
            = Y|_{\hat s\times\hat t}$
           and $X|_{\hat s\times\hat t} R|_{\hat t\times\hat t} 
            = Y|_{\hat s\times\hat t}$ for $\mathcal{H}$-matrices $X$ and $Y$}
  \label{fi:lrsolve}
\end{figure}

%
%

\paragraph{LR factorization}
Now we have the necessary tools at our disposal to address the
LR factorization.
Given an $\mathcal{H}$-matrix $G$, we consider computing
a lower triangular matrix $L$ with unit diagonal and an upper
triangular matrix $R$ with
\begin{equation*}
  G|_{\hat t\times\hat t} = L|_{\hat t\times\hat t} R|_{\hat t\times\hat t}
\end{equation*}
for a cluster $t\in\ctI$.
If $t$ is a leaf, $G|_{\hat t\times\hat t}$ is given in standard
array representation and we can compute the LR factorization by
the usual algorithms.

If $t$ is not a leaf, we follow (\ref{eq:submatrices_LR}) and define
\begin{align*}
  G_{\nu\mu} &:= G|_{\hat t_\nu\times\hat t_\mu} &
  &\text{ for all } \nu,\mu\in\{1,2\}.
\end{align*}
Our equation takes the form
\begin{align*}
  \begin{pmatrix}
    G_{11} & G_{12}\\
    G_{21} & G_{22}
  \end{pmatrix}
  &= G|_{\hat t\times\hat t}
   = L|_{\hat t\times\hat t} R|_{\hat t\times\hat t}
   = \begin{pmatrix}
    L_{11} & \\
    L_{21} & L_{22}
  \end{pmatrix}
  \begin{pmatrix}
    R_{11} & R_{12}\\
    & R_{22}
  \end{pmatrix}\\
  &= \begin{pmatrix}
    L_{11} R_{11} &
    L_{11} R_{12}\\
    L_{21} R_{11} &
    L_{21} R_{12} + L_{22} R_{22}
  \end{pmatrix}.
\end{align*}
We can compute the LR factorization of $G|_{\hat t\times\hat t}$ by first
computing the factorization $G_{11}=L_{11} R_{11}$ by recursion, followed
by solving $G_{21}=L_{21} R_{11}$ with ``rrsolve'' and
$G_{12}=L_{11} R_{12}$ with ``llsolve'', computing the
Schur complement $\widetilde{G}_{22}:=G_{22}-L_{21} R_{12}$ approximately
with ``addmul'', and finding the LR factorization
$L_{22} R_{22} = \widetilde{G}_{22}$, again by recursion.
The resulting algorithm is summarized in Figure~\ref{fi:lrdecomp},
where $G_{22}$ is overwritten by the Schur complement $\widetilde{G}_{22}$.

%
%
\begin{figure}
  \begin{tabbing}
    \textbf{procedure} lrdecomp($t$, \textbf{var} $G$, $L$, $R$);\\
    \textbf{if} $\sons(t)=\emptyset$ \textbf{then}\\
    \quad\= Compute $L|_{\hat t\times\hat t} R|_{\hat t\times\hat t} =
      G|_{\hat t\times\hat t}$ directly\\
    \textbf{else begin}\\
    \> lrdecomp($t_1$, $G$. $L$, $R$);\\
    \> llsolve($t_1$, $t_2$, $L$, $G$, $R$);\\
    \> rrsolve($t_2$, $t_1$, $R$, $G$, $L$);\\
    \> addmul($-1$, $t_2$, $t_1$, $t_2$, $L$, $R$, $G$);\\
    \> lrdecomp($t_2$, $G$, $L$, $R$)\\
    \textbf{end}\\
  \end{tabbing}
  \caption{Computing the LR factorization
           $G|_{\hat t\times\hat t}
            = L|_{\hat t\times\hat t} R|_{\hat t\times\hat t}$}
  \label{fi:lrdecomp}
\end{figure}

%
%

\paragraph{Triangular inversion}

The next operation to consider is the inversion of triangular
matrices, i.e., we are looking for
$\widetilde{L}:=L^{-1}$ and $\widetilde{R}:=R^{-1}$.
As before, we consider the inversion of submatrices
$L|_{\hat t\times\hat t}$ and $R|_{\hat t\times\hat t}$ for a cluster
$t\in\ctI$.

Again, if $t$ is a leaf, the matrices $L|_{\hat t\times\hat t}$ and
$R|_{\hat t\times\hat t}$ are given in standard representation and
can be inverted by standard algorithms.
If $t$ has sons, the inverses can be written as
\begin{align*}
  \begin{pmatrix}
    L_{11} & \\
    L_{21} & L_{22}
  \end{pmatrix}^{-1}
  &= \begin{pmatrix}
    L_{11}^{-1} & \\
    -L_{22}^{-1} L_{21} L_{11}^{-1} &
    L_{22}^{-1}
  \end{pmatrix},\\
  \begin{pmatrix}
    R_{11} & R_{12}\\
    & R_{22}
  \end{pmatrix}^{-1}
  &= \begin{pmatrix}
    R_{11}^{-1} & -R_{11}^{-1} R_{12} R_{22}^{-1}\\
    & R_{22}^{-1}
  \end{pmatrix},
\end{align*}
so we can compute the off-diagonal blocks by calling
``llsolve'' and ``lrsolve'' in the first case and
``rlsolve'' and ``rrsolve'' in the second case, and then invert
the diagonal blocks by recursion.
The algorithms are summarized in Figure~\ref{fi:linvert}.

%
%
\begin{figure}
  \begin{minipage}{0.45\textwidth}
  \begin{tabbing}
    \textbf{procedure} linvert($t$, $L$,
           \textbf{var} $\widetilde{L}$);\\
    \textbf{if} $\sons(t)=\emptyset$ \textbf{then}\\
    \quad\= Compute
      $\widetilde{L}|_{\hat t\times\hat t} = L|_{\hat t\times\hat t}^{-1}$
      directly\\
    \textbf{else begin}\\
    \> $\widetilde{L}|_{\hat t_2\times\hat t_1} \gets -L|_{\hat t_2\times\hat t_1}$;\\
    \> llsolve($t_2$, $t_1$, $L$, $\widetilde{L}$, $\widetilde{L}$);\\
    \> lrsolve($t_2$, $t_1$, $L$, $\widetilde{L}$, $\widetilde{L}$);\\
    \> linvert($t_1$, $L$, $\widetilde{L}$);\\
    \> linvert($t_2$, $L$, $\widetilde{L}$)\\
    \textbf{end}
  \end{tabbing}
  \hfill\strut
  \end{minipage}%
  \quad%
  \begin{minipage}{0.45\textwidth}
  \begin{tabbing}
    \textbf{procedure} rinvert($t$, $R$,
           \textbf{var} $\widetilde{R}$);\\
    \textbf{if} $\sons(t)=\emptyset$ \textbf{then}\\
    \quad\= Compute
      $\widetilde{R}|_{\hat t\times\hat t} = R|_{\hat t\times\hat t}^{-1}$
      directly\\
    \textbf{else begin}\\
    \> $\widetilde{R}|_{\hat t_1\times\hat t_2} \gets -R|_{\hat t_1\times\hat t_2}$;\\
    \> rlsolve($t_1$, $t_2$, $R$, $\widetilde{R}$, $\widetilde{R}$);\\
    \> rrsolve($t_1$, $t_2$, $R$, $\widetilde{R}$, $\widetilde{R}$);\\
    \> rinvert($t_1$, $R$, $\widetilde{R}$);\\
    \> rinvert($t_2$, $R$, $\widetilde{R}$)\\
    \textbf{enf}
  \end{tabbing}
  \hfill\strut
  \end{minipage}
  \vskip -\baselineskip

  \caption{Inverting triangular matrices
     $\widetilde{L}|_{\hat t\times\hat t} \gets L|_{\hat t\times\hat t}^{-1}$
     and
     $\widetilde{R}|_{\hat t\times\hat t} \gets R|_{\hat t\times\hat t}^{-1}$}
  \label{fi:linvert}
\end{figure}

%
%

\paragraph{Triangular matrix multiplication}
Finally, having $\widetilde{L}=L^{-1}$ and $\widetilde{R}=R^{-1}$
at our disposal, we consider computing the inverse
\begin{equation*}
  \widetilde{G} := G^{-1}
  = (LR)^{-1} = \widetilde{R} \widetilde{L}.
\end{equation*}
As in the previous cases, recursion leads to sub-problems of
the form
\begin{equation*}
  \widetilde{G}|_{\hat t\times\hat t}
  = \widetilde{R}|_{\hat t\times\hat t} \widetilde{L}|_{\hat t\times\hat t}
\end{equation*}
for clusters $t\in\ctI$.
If $t$ is a leaf, we can compute the product directly.

Otherwise, the equation takes the form
\begin{align*}
  \begin{pmatrix}
    \widetilde{R}_{11} & \widetilde{R}_{12}\\
    & \widetilde{R}_{22}
  \end{pmatrix}
  \begin{pmatrix}
    \widetilde{L}_{11} & \\
    \widetilde{L}_{21} & \widetilde{L}_{22}
  \end{pmatrix}
  &= \begin{pmatrix}
    \widetilde{R}_{11} \widetilde{L}_{11} +
    \widetilde{R}_{12} \widetilde{L}_{21} &
    \widetilde{R}_{12} \widetilde{L}_{22}\\
    \widetilde{R}_{22} \widetilde{L}_{21} &
    \widetilde{R}_{22} \widetilde{L}_{22}
  \end{pmatrix}.
\end{align*}
We can see that we have to compute products
$\widetilde{R}_{11} \widetilde{L}_{11}$ and
$\widetilde{R}_{22} \widetilde{L}_{22}$ that are of the same
kind as the original problem and can be handled by recursion.
We also have to compute the product
$\widetilde{R}_{12} \widetilde{L}_{21}$, which can be
accomplished by ``addmul''.

Finally, we have to compute $\widetilde{R}_{22} \widetilde{L}_{21}$
and $\widetilde{R}_{12} \widetilde{L}_{22}$, i.e., products of
triangular and non-triangular $\mathcal{H}$-matrices.
In order to handle this task, we could introduce suitable
counterparts of the algorithms ``rlsolve'' and
``lrsolve'' that multiply by a triangular matrix
instead of by its inverse.
These algorithms would in turn require counterparts of
``lsolve'' and ``rsolve'', i.e., we would
have to introduce four more algorithms.

To keep this article short, another approach can be used:
Since we have $\widetilde{R}_{22} = R_{22}^{-1}$ and
$\widetilde{L}_{22} = L_{22}^{-1}$, we can evaluate the product
$\widetilde{R}_{12} \widetilde{L}_{22}
= \widetilde{R}_{12} L_{22}^{-1}$ by the algorithm
``lrsolve'' and
$\widetilde{R}_{22} \widetilde{L}_{21}
= R_{22}^{-1} \widetilde{L}_{21}$ by the algorithm
``rlsolve'' without the need for additional algorithms.
The result is summarized in Figure~\ref{fi:lrinvert}.

%
%
\begin{figure}
  \begin{tabbing}
    \textbf{procedure} lrinvert($t$, $L$, $R$,
       \textbf{var} $\widetilde{L}$, $\widetilde{R}$, $\widetilde{G}$);\\
    \textbf{if} $\sons(t)=\emptyset$ \textbf{then}\\
    \quad\=  Compute $\widetilde{G}|_{\hat t\times\hat t}
          = R|_{\hat t\times\hat t}^{-1} L|_{\hat t\times\hat t}^{-1}$
      directly\\
    \textbf{else begin}\\
    \> lrinvert($t_1$, $L$, $R$, $\widetilde{L}$, $\widetilde{R}$,
              $\widetilde{G}$);\\
    \> addmul($1$, $t_1$, $t_2$, $t_1$,
              $\widetilde{R}$, $\widetilde{L}$, $\widetilde{G}$);\\
    \> lrsolve($t_1$, $t_2$,
              $L$, $\widetilde{R}$, $\widetilde{G}$);\\
    \> rlsolve($t_2$, $t_1$,
              $R$, $\widetilde{L}$, $\widetilde{G}$);\\
    \> lrinvert($t_2$, $L$, $R$, $\widetilde{L}$, $\widetilde{R}$,
                         $\widetilde{G}$)\\
    \textbf{end}
  \end{tabbing}

  \caption{Inversion using the LR factorization}
  \label{fi:lrinvert}
\end{figure}

%
%
\begin{remark}[In-place operation]
All algorithms for triangular matrices introduced in this
section can overwrite input variables with the result:
for triangular solves, the right-hand side can be overwritten
with the result, for the LR factorization, the lower and upper
triangular parts of the input matrix can be overwritten with
the triangular factors, the triangular inversion algorithms
can overwrite the input matrices with the inverses.

If we are only interested in computing the inverse, we can
interleave the algorithm ``lrinvert'' with ``linvert'' and
``rinvert'' to avoid additional storage for the intermediate
results $\widetilde{L}$ and $\widetilde{R}$:
once the LR factorization is available, we first overwrite the off-diagonal
blocks $L_{21}$ and $R_{12}$ by the intermediate results
$\widetilde{L}_{21} = -L_{22}^{-1} L_{21} L_{11}^{-1}$ and
$\widetilde{R}_{12} = -R_{11}^{-1} R_{12}^{-1} R_{22}^{-1}$, then overwrite
the first diagonal block recursively by its inverse, add the product
$\widetilde{R}_{12} \widetilde{L}_{21}$, then overwrite the no longer
required off-diagonal blocks with $R_{22}^{-1} \widetilde{L}_{21}$ and
$\widetilde{R}_{12} L_{22}^{-1}$.
A recursive call to compute the inverse of the second diagonal block
completes the algorithm.
\end{remark}


\section{Complexity estimates for combined operations}

Due to the lack of symmetry introduced by the low-rank
approximation steps required to compute an
$\mathcal{H}$-matrix, we cannot prove that the LR factorization
requires one third of the work of the matrix multiplication.
We can, however, prove that the LR factorization $G=LR$, the
inversion of the triangular factors, and the multiplication
$R^{-1} L^{-1}$ \emph{together} require not more work than
the matrix multiplication.

Before we can consider the $\mathcal{H}$-matrix case, we recall
the corresponding estimates for standard matrices, cf.
(\ref{eq:dense_operations}): the LR factorization,
triangular matrix inversion, and multiplication require
\begin{equation*}
  \frac{n}{6} (4n^2 - 3n - 1),\quad
  \frac{n}{6} (2n^2 + 4), \text{ and}\quad
  \frac{n}{6} (4 n^2 - 3n - 1) \quad\text{ operations.}
\end{equation*}
By adding the estimates for the four parts of the inversion
algorithm, we obtain a computational cost of
\begin{equation*}
  \frac{n}{6} (12 n^2 - 6 n + 6)
  = n (2n^2 - n + 1) \leq 2 n^3 \quad\text{ operations,}
\end{equation*}
i.e., inverting $G$ requires less operations than multiplying
the matrix by itself and adding the result to a matrix.
We aim to obtain a similar result for $\mathcal{H}$-matrices.

We assume that the block tree is \emph{admissible}, i.e., that
a leaf $b=(t,s)$ of the block tree $\ctII$ is either admissible or
has a leaf of $\ctI$ either as row or column cluster:
\begin{align}\label{eq:block_admissible}
  (t,s)\in\lfiII &\Rightarrow (t\in\lfI \vee s\in\lfI) &
  &\text{ for all } (t,s)\in\ctII,
\end{align}
and we assume that there is a constant $\varrho\in\bbbn$ such
that
\begin{align}\label{eq:resolution}
  t\in\lfI &\iff |\hat t|\leq\varrho &
  &\text{ for all } t\in\ctI.
\end{align}
The constant $\varrho$ is called the \emph{resolution} (sometimes
also the \emph{leaf size}) of $\ctI$.
Both properties (\ref{eq:block_admissible}) and (\ref{eq:resolution})
can be ensured during the construction of the cluster tree.

Now let us consider the number of operations for the algorithms
``lsolve'' and ``rsolve'' given in Figure~\ref{fi:lsolve}.
If $t\in\ctI$ is a leaf, solving the linear systems requires
$|\hat t|^2$ operations.
Otherwise, we just use ``addeval'' and recursive calls
and arrive at the recurrence formulas
\begin{align*}
  \Wls(t,\ell)
  &:= \begin{cases}
        \ell |\hat t|^2 &\text{ if } t\in\lfI,\\
        \Wls(t_1,\ell) + \Wls(t_2,\ell) + \Wev(t_2,t_1,\ell)
        &\text{ otherwise},
      \end{cases}\\
  \Wrs(t,\ell)
  &:= \begin{cases}
        \ell |\hat t|^2 &\text{ if } t\in\lfI,\\
        \Wrs(t_1,\ell) + \Wrs(t_2,\ell) + \Wev(t_1,t_2,\ell)
        &\text{ otherwise}
      \end{cases} &
  &\text{ for } t\in\ctI
\end{align*}
that give bounds for the number of operations required by
``lsolve'' and ``rsolve'', respectively,
where $\ell\in\bbbn$ again denotes the columns of the matrices
$X$ and $Y$.

%
%
\begin{lemma}[Solving linear systems]
\label{le:lsolve}
We have
\begin{align*}
  \Wls(t,\ell) + \Wrs(t,\ell)
  &\leq \Wev(t,t,\ell) &
  &\text{ for all } t\in\ctI,\ \ell\in\bbbn.
\end{align*}
\end{lemma}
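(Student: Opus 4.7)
The plan is to proceed by structural induction on $t\in\ctI$, exploiting the observation that the recurrences for $\Wls$, $\Wrs$, and $\Wev$ all split along $\sons(t)\times\sons(t)$ in a compatible way.

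For the base case, suppose $t\in\lfI$. Both $\Wls(t,\ell)$ and $\Wrs(t,\ell)$ equal $\ell|\hat t|^2$, so their sum is $2\ell|\hat t|^2$. Since $t$ has no sons, $(t,t)$ must be a leaf of $\ctII$; diagonal blocks are never admissible by our standing assumption, so $(t,t)\in\lfiII$, which gives $\Wev(t,t,\ell)=\ell(2|\hat t|^2+|\hat t|)\geq 2\ell|\hat t|^2$.

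For the inductive step, assume $t$ has sons $t_1<t_2$, and that the claim holds for both. The key structural point is that $(t,t)$ cannot be a leaf of $\ctII$: an admissible leaf on the diagonal is forbidden, and an inadmissible leaf would, by the admissibility hypothesis (\ref{eq:block_admissible}), force $t\in\lfI$, contradicting the assumption that $t$ has sons. Hence $\sons((t,t))=\sons(t)\times\sons(t)$ and the recurrence for $\Wev$ yields
\begin{equation*}
  \Wev(t,t,\ell)
  = \Wev(t_1,t_1,\ell)+\Wev(t_1,t_2,\ell)+\Wev(t_2,t_1,\ell)+\Wev(t_2,t_2,\ell).
\end{equation*}
The recurrences for $\Wls$ and $\Wrs$ combine to
\begin{align*}
  \Wls(t,\ell)+\Wrs(t,\ell)
  &= \bigl[\Wls(t_1,\ell)+\Wrs(t_1,\ell)\bigr]
   + \bigl[\Wls(t_2,\ell)+\Wrs(t_2,\ell)\bigr]\\
  &\quad + \Wev(t_2,t_1,\ell)+\Wev(t_1,t_2,\ell),
\end{align*}
and applying the induction hypothesis to the two bracketed terms, they are absorbed by $\Wev(t_1,t_1,\ell)$ and $\Wev(t_2,t_2,\ell)$, yielding the desired bound.

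The only nontrivial point is the structural observation that $(t,t)\notin\lfII$ when $t\notin\lfI$; without this, the recursive expansion of $\Wev(t,t,\ell)$ would not be available, and one would have to check cases with a tighter bound. Once that is in place, the inequality follows by a clean summation, with the two cross terms $\Wev(t_1,t_2,\ell)$ and $\Wev(t_2,t_1,\ell)$ matching exactly the costs of the \emph{addeval} calls in \emph{lsolve} and \emph{rsolve}.
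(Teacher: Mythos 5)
Your proof is correct and follows essentially the same structural induction as the paper, decomposing $\Wls+\Wrs$ and $\Wev(t,t,\cdot)$ along $\sons(t)\times\sons(t)$ and absorbing the diagonal terms by the induction hypothesis. The only difference is expository: you spell out why $(t,t)\in\lfiII$ in the base case and why $(t,t)\notin\lfII$ in the inductive step (using the never-admissible-diagonal assumption and (\ref{eq:block_admissible})), whereas the paper takes these structural facts for granted.
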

\begin{proof}
By structural induction.

Let $t\in\lfI$.
We have $(t,t)\in\lfiII$ and therefore
\begin{equation*}
  \Wls(t,\ell) + \Wrs(t,\ell)
  = 2 \ell |\hat t|^2
  \leq \Wev(t,t,\ell).
\end{equation*}
Let now $t\in\ctI\setminus\lfI$ be such that our claim holds
for the sons $t_1$ and $t_2$.
We have
\begin{align*}
  \Wls(t,\ell) + \Wrs(t,\ell)
  &= \Wls(t_1,\ell) + \Wls(t_2,\ell) + \Wev(t_2,t_1,\ell)\\
  &+ \Wrs(t_1,\ell) + \Wrs(t_2,\ell) + \Wev(t_1,t_2,\ell)\\
  &\leq \Wev(t_1,t_1,\ell) + \Wev(t_2,t_2,\ell)
      + \Wev(t_2,t_1,\ell) + \Wev(t_1,t_2,\ell)\\
  &= \Wev(t,t,\ell).
\end{align*}
\end{proof}

In the next step, we compare the computational work for the
$\mathcal{H}$-matrix multiplication with that for the combination
of the algorithms ``llsolve'' and ``rlsolve'' or
``lrsolve'' and ``rrsolve'', respectively.

The computational work for the forward substitution algorithm
``llsolve'' given in Figure~\ref{fi:llsolve} can be bounded
by
\begin{align*}
  \Wll(t,s)
  &:= \begin{cases}
    \Wls(t,k) &\text{ if } (t,s)\in\lfaII,\\
    \Wls(t,|\hat s|) &\text{ if } (t,s)\in\lfiII,\\
    \sum_{s'\in\sons(s)} \Wll(t_1,s') + \Wll(t_2,s')
           + \Wmm(t_2,t_1,s') &\text{ otherwise}
  \end{cases}
\end{align*}
for all $(t,s)\in\ctII$, while we get
\begin{align*}
  \Wrl(t,s)
  &:= \begin{cases}
    \Wrs(t,k) &\text{ if } (t,s)\in\lfaII,\\
    \Wrs(t,|\hat s|) &\text{ if } (t,s)\in\lfiII,\\
    \sum_{s'\in\sons(s)} \Wrl(t_1,s') + \Wrl(t_2,s')
           + \Wmm(t_1,t_2,s') &\text{ otherwise}
  \end{cases}
\end{align*}
for all $(t,s)\in\ctII$ for the algorithm ``rlsolve''.

%
%
\begin{lemma}[Forward and backward solves]
\label{le:llsolve}
We have
\begin{align*}
  \Wll(t,s) + \Wrl(t,s) &\leq \Wmm(t,t,s) &
  &\text{ for all } (t,s)\in\ctII.
\end{align*}
\end{lemma}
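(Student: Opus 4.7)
The plan is a structural induction on the block tree, mirroring the three cases in the recursive definitions of $\Wll$ and $\Wrl$. The basic idea is that the leaf cases can be dispatched by feeding Lemma~\ref{le:lsolve} into a direct comparison with $\Wmm(t,t,s)$, while the internal case unfolds naturally: each ``diagonal'' recursive call $\Wll(t_i,s')+\Wrl(t_i,s')$ is absorbed by the inductive hypothesis into $\Wmm(t_i,t_i,s')$, and the two ``off-diagonal'' $\Wmm$ terms already present in $\Wll$ and $\Wrl$ supply the remaining $\Wmm(t_1,t_2,s')$ and $\Wmm(t_2,t_1,s')$ needed to reassemble the recursive expansion of $\Wmm(t,t,s)$.

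For $(t,s)\in\lfaII$, Lemma~\ref{le:lsolve} with $\ell=k$ gives $\Wll(t,s)+\Wrl(t,s)\le\Wev(t,t,k)$. I would split on whether $(t,t)\in\lfII$. If not, the second case of $\Wmm$ fires with $k_{ts}=k$ and produces $\Wev(t,t,k)+\Wup(t,s,k)$ directly. If so, non-admissibility of diagonal blocks combined with (\ref{eq:block_admissible}) forces $t\in\lfI$, so the first case of $\Wmm$ gives $\Wev(t,s,|\hat t|)+\Wup(t,s,|\hat t|)$ with $k_{tt}=|\hat t|$; a one-line comparison of $k(2|\hat t|^2+|\hat t|)$ against $2|\hat t|k(|\hat t|+|\hat s|)$, using $|\hat s|\ge1$, closes the case.

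For $(t,s)\in\lfiII$, Lemma~\ref{le:lsolve} with $\ell=|\hat s|$ gives $\Wll(t,s)+\Wrl(t,s)\le\Wev(t,t,|\hat s|)$, and (\ref{eq:block_admissible}) forces $t\in\lfI$ or $s\in\lfI$. If $t\in\lfI$ the first case of $\Wmm$ applies; a case split on whether $|\hat s|\le|\hat t|$ verifies $\Wev(t,t,|\hat s|)\le\Wev(t,s,|\hat t|)+\Wup(t,s,|\hat t|)$. If $t\not\in\lfI$ (so $s\in\lfI$), then (\ref{eq:resolution}) yields $|\hat s|\le\varrho<|\hat t|$, hence $k_{ts}=|\hat s|$; the same diagonal/admissibility argument rules out $(t,t)\in\lfII$, so the second case of $\Wmm$ gives $\Wev(t,t,|\hat s|)+\Wup(t,s,|\hat s|)$ and the bound is immediate.

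For internal $(t,s)\not\in\lfII$, both $t$ and $s$ must have sons, so $t\not\in\lfI$ and therefore $(t,t)\not\in\lfII$ as well. Expanding the recursive formulas, the induction hypothesis bounds $\Wll(t_i,s')+\Wrl(t_i,s')\le\Wmm(t_i,t_i,s')$ for $i=1,2$, and together with the $\Wmm(t_2,t_1,s')+\Wmm(t_1,t_2,s')$ terms already present, we obtain exactly the four contributions $\sum_{t',t''\in\sons(t)}\Wmm(t',t'',s')$ per $s'\in\sons(s)$, which matches the recursive expansion of $\Wmm(t,t,s)$ after dropping the non-negative merge cost $\Cmg k^2(|\hat t|+|\hat s|)$. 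The main obstacle I anticipate is the arithmetic bookkeeping in the inadmissible-leaf sub-case with $t\in\lfI$ and $|\hat s|>|\hat t|$: the naive inequality $\Wev(t,t,|\hat s|)\le\Wev(t,s,|\hat t|)$ actually fails, and one must genuinely invoke the extra $\Wup(t,s,|\hat t|)=2|\hat t|^2|\hat s|$ hidden inside $\Wmm$ to absorb the surplus. Elsewhere the structural setup is rigid enough that the inequality writes itself.
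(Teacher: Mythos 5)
Your proposal is correct and follows the same structural induction with the same three-way case split (admissible leaf, inadmissible leaf, internal block) as the paper, and the internal case is handled identically; you are even slightly more careful than the paper in explicitly noting that the merge cost $\Cmg k^2(|\hat t|+|\hat s|)$ is dropped, whereas the paper writes an equality there where an inequality is meant.

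The one genuine difference is in the base cases when $t\in\lfI$: you always route through Lemma~\ref{le:lsolve} to get $\Wll(t,s)+\Wrl(t,s)\leq\Wev(t,t,\ell)$ and then compare $\Wev(t,t,\ell)$ against $\Wmm(t,t,s)$, whereas the paper instead evaluates $\Wls(t,\ell)+\Wrs(t,\ell)=2\ell|\hat t|^2$ directly from the leaf formula and compares that against $\Wev(t,s,|\hat t|)=|\hat t|\bigl(2|\hat t|\,|\hat s|+\min\{|\hat t|,|\hat s|\}\bigr)\geq 2|\hat t|^2|\hat s|$ (or, in the admissible case, $2|\hat t|k(|\hat t|+|\hat s|)\geq 2k|\hat t|^2$). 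The paper's direct comparison closes both leaf sub-cases with $t\in\lfI$ without ever touching the $\Wup$ term and without any case split on $|\hat s|$ versus $|\hat t|$. The ``obstacle'' you flag in the inadmissible sub-case with $|\hat s|>|\hat t|$ is therefore real but entirely self-inflicted: routing through $\Wev(t,t,|\hat s|)$ inserts the extra $|\hat s|\,|\hat t|$ term from the $\min$ in the diagonal block, which the direct evaluation of $\Wls+\Wrs$ never produces. Your workaround via $\Wup(t,s,|\hat t|)$ is valid, just less tight and a bit more work than necessary.
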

\begin{proof}
By structural induction, where the base case $(t,s)\in\lfII$ is
split into two sub-cases for admissible and inadmissible leaves.

\noindent
\emph{Case 1:}
Let $(t,s)\in\lfaII$.
If $(t,t)\in\lfII$ holds, we have $t\in\lfI$ and
\begin{align*}
  \Wmm(t,t,s)
  &\geq \Wev(t,s,k_{tt})
   = \Wev(t,s,|\hat t|)
   \geq 2 |\hat t|^2 k\\
  &= \Wls(t,k) + \Wrs(t,k)
   = \Wll(t,s) + \Wrl(t,s).
\end{align*}
Otherwise, i.e., if $(t,t)\not\in\lfII$, we have $t\not\in\lfI$,
and Lemma~\ref{le:lsolve} yields
\begin{align*}
  \Wmm(t,t,s)
  &\geq \Wev(t,t,k_{ts})
   = \Wev(t,t,k)\\
  &\geq \Wls(t,k) + \Wrs(t,k)
   = \Wll(t,s) + \Wrl(t,s).
\end{align*}

\noindent
\emph{Case 2:}
Let $(t,s)\in\lfiII$.
If $(t,t)\in\lfII$ holds, we have $t\in\lfI$ and
\begin{align*}
  \Wmm(t,t,s)
  &\geq \Wev(t,s,k_{tt})
   = \Wev(t,s,|\hat t|)
   \geq 2 |\hat t|^2 |\hat s|\\
  &= \Wls(t,|\hat s|) + \Wrs(t,|\hat s|)
   = \Wll(t,s) + \Wrl(t,s).
\end{align*}
Otherwise, i.e., if $(t,t)\not\in\lfII$, we have $t\not\in\lfI$ and
therefore $s\in\lfI$.
Due to (\ref{eq:resolution}), this means $|\hat s| \leq \varrho < |\hat t|$,
and we can use $k_{ts} = |\hat s|$ and Lemma~\ref{le:lsolve} to obtain
\begin{align*}
  \Wmm(t,t,s)
  &\geq \Wev(t,t,k_{ts})
   = \Wev(t,t,|\hat s|)\\
  &\geq \Wls(t,|\hat s|) + \Wrs(t,|\hat s|)
   = \Wll(t,s) + \Wlr(t,s).
\end{align*}

\noindent
\emph{Case 3:}
Let $(t,s)\in\ctII\setminus\lfII$ be such that our claim holds
for all sons of $(t,s)$.
Since $(t,s)$ is not a leaf, we have $t\not\in\lfI$ and therefore
also $(t,t)\not\in\lfII$.
This implies
\begin{align*}
  \Wll(t,s) + \Wrl(t,s)
  &= \sum_{s'\in\sons(s)} \Wll(t_1,s') + \Wll(t_2,s') + \Wmm(t_2,t_1,s')\\
  &\qquad + \Wrl(t_1,s') + \Wrl(t_2,s') + \Wmm(t_1,t_2,s')\\
  &\leq \sum_{s'\in\sons(s)} \Wmm(t_1,t_1,s') + \Wmm(t_2,t_2,s')\\
  &\qquad + \Wmm(t_2,t_1,s') + \Wmm(t_1,t_2,s')\\
  &= \Wmm(t,t,s),
\end{align*}
and our proof is complete.
\end{proof}

Now we consider the two algorithms ``lrsolve'' and
``rrsolve''.
They rely on ``lsolvetrans'' and ``rsolvetrans'',
and these algorithms require the same work as ``lsolve'' and
``rsolve'', respectively.
The work for ``lrsolve'' and ``rrsolve'' is then
bounded by
\begin{align*}
  \Wlr(s,t)
  &= \begin{cases}
    \Wrs(t,k) &\text{ if } (s,t)\in\lfaII,\\
    \Wrs(t,|\hat s|) &\text{ if } (s,t)\in\lfiII,\\
    \sum_{s'\in\sons(s)} \Wlr(s',t_1) + \Wlr(s',t_2) + \Wmm(s',t_2,t_1)
    &\text{ otherwise},
  \end{cases}\\
  \Wrr(s,t)
  &= \begin{cases}
    \Wls(t,k) &\text{ if } (s,t)\in\lfaII,\\
    \Wls(t,|\hat s|) &\text{ if } (s,t)\in\lfiII,\\
    \sum_{s'\in\sons(s)} \Wrr(s',t_1) + \Wrr(s',t_2) + \Wmm(s',t_1,t_2)
    &\text{ otherwise}
  \end{cases}
\end{align*}
for all $(s,t)\in\ctII$.
Proceeding as in the proof of Lemma~\ref{le:llsolve} leads us to
\begin{align*}
  \Wlr(s,t) + \Wrr(s,t) &\leq \Wmm(s,t,t) &
  &\text{ for all } (s,t)\in\ctII.
\end{align*}
Now that the fundamental statements for the forward and backward
subsitution algorithms are at our disposal, we can consider the
factorization and inversion algorithms.
We directly obtain the bounds
\begin{align*}
  \Wdc(t)
  &:= \begin{cases}
    \frac{|\hat t|}{6} (4|\hat t|^2-3|\hat t|-1) &\text{ if } t\in\lfI,\\
    \Wdc(t_1) + \Wdc(t_2)
    + \Wll(t_1,t_2)\\
    + \Wrr(t_2,t_1) + \Wmm(t_2,t_1,t_2) &\text{ otherwise},
  \end{cases}\\
  \Wli(t)
  &:= \begin{cases}
    \frac{|\hat t|}{6} (2|\hat t|^2+4) &\text{ if } t\in\lfI,\\
    \Wll(t_2,t_1) + \Wlr(t_2,t_1)\\
    + \Wli(t_1) + \Wli(t_2) &\text{ otherwise},
  \end{cases}\\
  \Wri(t)
  &:= \begin{cases}
    \frac{|\hat t|}{6} (2|\hat t|^2+4) &\text{ if } t\in\lfI,\\
    \Wrl(t_1,t_2) + \Wrr(t_1,t_2)\\
    + \Wri(t_1) + \Wri(t_2) &\text{ otherwise},
  \end{cases}\\
  \Win(t)
  &:= \begin{cases}
    \frac{|\hat t|}{6} (4|\hat t|^2-3|\hat t|-1) &\text{ if } t\in\lfI,\\
    \Win(t_1) + \Win(t_2) + \Wmm(t_1,t_2,t_1)\\
    + \Wlr(t_1,t_2) + \Wrl(t_2,t_1) &\text{ otherwise}
  \end{cases}
\end{align*}
for all clusters $t\in\ctI$ and the algorithms ``lrdecomp'',
``linvert'', ``rinvert'', and ``lrinvert'', respectively.

%
%
\begin{theorem}[Combined complexity]
We have
\begin{align*}
  \Wdc(t) + \Wli(t) + \Wri(t) + \Win(t)
  &\leq \Wmm(t,t,t) &
  &\text{ for all } t\in\ctI.
\end{align*}
\end{theorem}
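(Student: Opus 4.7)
The plan is structural induction on the cluster tree $\ctI$: both sides of the inequality have parallel recursive structures, so one just needs to match the terms generated by expanding $\Wdc+\Wli+\Wri+\Win$ against the eight cross-contributions in $\Wmm(t,t,t)$.

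For the base case $t\in\lfI$, the four work functions evaluate to their explicit polynomial values and sum to $\tfrac{|\hat t|}{6}(12|\hat t|^2-6|\hat t|+6)\leq 2|\hat t|^3$, exactly as for dense matrices. Since diagonal blocks are inadmissible we have $(t,t)\in\lfiII$ and $k_{tt}=|\hat t|$, so $\Wmm(t,t,t)=\Wev(t,t,|\hat t|)+\Wup(t,t,|\hat t|)\geq 4|\hat t|^3$, which dominates.

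For the inductive step with sons $t_1,t_2$, I would expand each of $\Wdc(t)$, $\Wli(t)$, $\Wri(t)$, $\Win(t)$ using its recurrence. The diagonal recursive contributions $\Wdc(t_i)+\Wli(t_i)+\Wri(t_i)+\Win(t_i)$ are bounded by $\Wmm(t_i,t_i,t_i)$ via the induction hypothesis for $i=1,2$. The eight solver-work terms split into two groups that fit Lemma~\ref{le:llsolve} and its ``lrsolve''/``rrsolve'' counterpart stated just before the theorem: pairings of the form $\Wll(t_\nu,t_\mu)+\Wrl(t_\nu,t_\mu)\leq\Wmm(t_\nu,t_\nu,t_\mu)$ handle four of them, and pairings of the form $\Wlr(t_\nu,t_\mu)+\Wrr(t_\nu,t_\mu)\leq\Wmm(t_\nu,t_\mu,t_\mu)$ handle the other four. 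What remains are two explicit $\Wmm(t_2,t_1,t_2)$ and $\Wmm(t_1,t_2,t_1)$ contributions coming directly from the $\Wdc$ and $\Win$ recurrences.

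A direct tally shows that the eight $\Wmm$-terms assembled this way are exactly the eight triples $(t',s',r')\in\sons(t)^3$, with no duplicates and none missed. Because $t$ is not a leaf, $(t,t)\not\in\lfII$ and the ``otherwise'' branch of (\ref{eq:Wmm}) gives $\Wmm(t,t,t)=\sum_{t',s',r'\in\sons(t)}\Wmm(t',s',r')+2\Cmg k^2|\hat t|$; discarding the non-negative merge cost closes the induction. The only real obstacle here is bookkeeping --- verifying that each of the eight triples is hit exactly once, and that the $(t_\nu,t_\mu)$ argument orderings inside $\Wll$, $\Wrl$, $\Wlr$, $\Wrr$ match the row/column conventions under which Lemma~\ref{le:llsolve} and its counterpart were proved.
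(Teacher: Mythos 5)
Your proof is correct and follows essentially the same route as the paper: structural induction, the same explicit polynomial computation in the base case, and the same regrouping of the eight solver/multiplication terms from the recurrences of $\Wdc$, $\Wli$, $\Wri$, $\Win$ into the eight triples in $\sons(t)^3$ using Lemma~\ref{le:llsolve} and its $\Wlr/\Wrr$ counterpart, with the merge cost discarded at the end. The only cosmetic difference is in the base case, where you bound $\Wmm(t,t,t)$ by the full $\Wev+\Wup\geq 4|\hat t|^3$ rather than the paper's slightly looser $\Wev(t,t,|\hat t|)\geq 2|\hat t|^3$; both suffice.
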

\begin{proof}
By structural induction.
We start with the base case $t\in\lfI$ and observe
\begin{align*}
  \Wdc(t) + \Wli(t) &+ \Wri(t) + \Win(t)\\
  &= \frac{|\hat t|}{6} (4|\hat t|^2-3|\hat t|-1 + 2|\hat t|^2+4 + 2|\hat t|^2+4 + 4|\hat t|^2-3|\hat t|-1)\\
  &= \frac{|\hat t|}{6} (12 |\hat t|^2 - 6 |\hat t| + 6)
   \leq 2 |\hat t|^3 = \Wev(t,t,|\hat t|) \leq \Wmm(t,t,t).
\end{align*}
Now let $t\in\ctI$ be chosen such that the estimate holds
for all of its sons.
We obtain
\begin{align*}
  \Wdc(t) + \Wli(t) &+ \Wri(t) + \Win(t)\\
  &= \Wdc(t_1) + \Wdc(t_2) + \Wll(t_1,t_2) + \Wrr(t_2,t_1)
     + \Wmm(t_2,t_1,t_2)\\
  &\quad + \Wll(t_2,t_1) + \Wlr(t_2,t_1) + \Wli(t_1) + \Wli(t_2)\\
  &\quad + \Wrl(t_1,t_2) + \Wrr(t_1,t_2) + \Wri(t_1) + \Wri(t_2)\\
  &\quad + \Win(t_1) + \Win(t_2) + \Wmm(t_1,t_2,t_1) + \Wlr(t_1,t_2)
     + \Wrl(t_2,t_1)\\
  &= \Wdc(t_1) + \Wli(t_1) + \Wri(t_1) + \Win(t_1)\\
  &\quad + \Wdc(t_2) + \Wli(t_2) + \Wri(t_2) + \Win(t_2)\\
  &\quad + \Wll(t_1,t_2) + \Wrl(t_1,t_2)\\
  &\quad + \Wlr(t_2,t_1) + \Wrr(t_2,t_1)\\
  &\quad + \Wll(t_2,t_1) + \Wrl(t_2,t_1)\\
  &\quad + \Wlr(t_1,t_2) + \Wrr(t_1,t_2)\\
  &\quad + \Wmm(t_2,t_1,t_2) + \Wmm(t_1,t_2,t_1)\\
  &\leq \Wmm(t_1,t_1,t_1) + \Wmm(t_2,t_2,t_2)\\
  &\quad + \Wmm(t_1,t_1,t_2) + \Wmm(t_2,t_1,t_1)\\
  &\quad + \Wmm(t_2,t_2,t_1) + \Wmm(t_1,t_2,t_2)\\
  &\quad + \Wmm(t_2,t_1,t_2) + \Wmm(t_1,t_2,t_1)
   \leq \Wmm(t,t,t),
\end{align*}
where we have used Lemma~\ref{le:llsolve} in the next-to-last
estimate.
\end{proof}


\section{\texorpdfstring{Complexity of the $\mathcal{H}$-matrix
  multiplication}{Complexity of the H-matrix multiplication}}

We have seen that the number of operations for our algorithms
can be bounded by the number of operations $\Wmm(t,s,r)$ required by the
matrix multiplication.
In order to complete the analysis, we derive a bound for
$\Wmm(t,s,r)$ that is sharper than the standard results provided
in \cite{GRHA02,HA15}.
We rely on the following assumptions:
\begin{itemize}
  \item for an inadmissible leaf $b=(t,s)\in\lfiII$ of the block
    tree, we have either $t\in\lfI$ or $s\in\lfI$,
  \item there is a constant $m\in\bbbn$, e.g., the resolution
    introduced in (\ref{eq:resolution}), such that
    \begin{align*}
      |\hat t| &\leq m &
      &\text{ for all leaves } t\in\lfI \text{ of the cluster tree},
    \end{align*}
  \item there is a constant $p\in\bbbn_0$ such that
    \begin{align*}
      \level(t) &\leq p &
      &\text{ for all } t\in\ctI,
    \end{align*}
    i.e., $p$ is an upper bound for the depth of the cluster tree,
  \item the block tree is \emph{sparse}, i.e., there is a constant
    $\Csp\in\bbbn$ such that
    \begin{subequations}\label{eq:Csp}
    \begin{align}
      |\{s\in\ctI\ :\ (t,s)\in\ctII\}| &\leq \Csp &
      &\text{ for all } t\in\ctI,\\
      |\{t\in\ctI\ :\ (t,s)\in\ctII\}| &\leq \Csp &
      &\text{ for all } s\in\ctI.
    \end{align}
    \end{subequations}
\end{itemize}
We denote the maximal rank of leaf blocks by
\begin{equation*}
  \hat k := \max\{ k_{ts}\ :\ (t,s)\in\lfII \} \leq \max\{ k, m \}.
\end{equation*}
In order to facilitate working with sums involving clusters and
blocks, we introduce the sets of \emph{descendants} of clusters
and blocks by
\begin{align*}
  \desc(t)
  &:= \{ t \}
    \cup \kern-5pt \bigcup_{t'\in\sons(t)} \kern-5pt \desc(t'),\\
  \desc(t,s)
  &:= \begin{cases}
    \{ (t,s) \}
    \cup \bigcup_{(t',s')\in\sons(t,s)}
          \desc(t',s') & \text{ if } (t,s)\in\ctII,\\
    \{ (t,s) \} &\text{ otherwise}
  \end{cases}
\end{align*}
for all $t,s\in\ctI$.
The special case $(t,s)\not\in\ctII$ will be important when
dealing with products of hierarchical matrices that are added
to a \emph{part} of a low-rank submatrix.

Since the matrix multiplication relies on the matrix-vector
multiplication, we start by deriving a bound for
$\Wev(t,s,\ell)$ introduced in (\ref{eq:Wev}).
If $(t,s)\in\lfiII$, our first assumption yields $t\in\lfI$ or
$s\in\lfI$.
In the first case, the second assumption gives us $|\hat t|\leq m$
and
\begin{equation*}
  \Wev(t,s,\ell) = \ell (2 |\hat t|\,|\hat s| + \min\{|\hat t|,|\hat s|\})
  \leq \ell (2 m |\hat s| + |\hat t|)
  \leq 2 \ell m (|\hat t| + |\hat s|).
\end{equation*}
In the second case, we have $|\hat s|\leq m$ and obtain
\begin{equation*}
  \Wev(t,s,\ell) = \ell (2 |\hat t|\,|\hat s| + \min\{|\hat t|,|\hat s|\})
  \leq \ell (2 m |\hat t| + |\hat s|)
  \leq 2 \ell m (|\hat t| + |\hat s|).
\end{equation*}
Now we can combine the estimates for the inadmissible leaves with
those for the admissible ones to find
\begin{align*}
  \Wev(t,s,\ell)
  &\leq \begin{cases}
     2 \ell \hat k (|\hat t| + |\hat s|)
     &\text{ if } (t,s)\in\lfII,\\
     \sum_{\substack{t'\in\sons(t)\\ s'\in\sons(s)}} \Wev(t',s',\ell)
     &\text{ otherwise}
  \end{cases}
\end{align*}
for all $b=(t,s)\in\ctII$.
A straightforward induction yields
\begin{align}\label{eq:Wev_tree}
  \Wev(t,s,\ell)
  &\leq 2 \ell \hat k \kern-5pt \sum_{(t',s')\in\desc(t,s)} \kern-5pt
     |\hat t'| + |\hat s'| &
  &\text{ for all } b=(t,s)\in\ctII.
\end{align}
Next we consider the low-rank update and look for an upper bound
for $\Wup(t,s)$ introduced in (\ref{eq:Wup}).
Using the same arguments as before, we find
\begin{align*}
  \Wup(t,s,\ell)
  &\leq \begin{cases}
     \sum_{\substack{t'\in\sons(t)\\ s'\in\sons(s)}}
       \Wup(t',s',\ell)
     &\text{ if } (t,s)\in\ctII\setminus\lfII,\\
     2 \ell \hat k (|\hat t|+|\hat s|)
     &\text{ if } (t,s)\in\lfiII,\\
     \Cad (\hat k+\ell)^2 (|\hat t|+|\hat s|)
     &\text{ otherwise}
  \end{cases}
\end{align*}
for all $t,s\in\ctI$, and introducing $\Cup := \max\{\Cad, 1\}$
yields the upper bound
\begin{align*}
  \Wup(t,s,\ell)
  &\leq \begin{cases}
     \sum_{\substack{t'\in\sons(t)\\ s'\in\sons(s)}}
       \Wup(t',s',\ell)
     &\text{ if } (t,s)\in\ctII\setminus\lfII,\\
     \Cup (\hat k+\ell)^2 (|\hat t|+|\hat s|)
     &\text{ otherwise}
  \end{cases}
\end{align*}
for all $t,s\in\ctII$ due to $2\ell \hat k\leq (\hat k+\ell)^2$.
A straightforward induction, keeping in mind the special case
of $\desc(t,s)$ for $(t,s)\not\in\ctII$, leads to the estimate
\begin{align}\label{eq:Wup_tree}
  \Wup(t,s,\ell)
  &\leq \Cup (\hat k+\ell)^2
    \kern-5pt \sum_{(t',s')\in\desc(t,s)} \kern-5pt
    |\hat t'| + |\hat s'| &
  &\text{ for all } t,s\in\ctII.
\end{align}
Now we can investigate the matrix multiplication, i.e., we can
look for an upper bound for $\Wmm(t,s,r)$ introduced
in (\ref{eq:Wmm}).
While the computational work for the matrix-vector multiplication
and the update depends only on two clusters $t,s\in\ctI$, the
matrix multiplication depends on three $t,s,r\in\ctI$.
We can collect these triples in a special \emph{product tree}
that represents the recursive structure of the algorithm.

%
%
\begin{definition}[Product tree]
\label{de:product_tree}
Given a cluster tree $\ctI$ and a corresponding block tree $\ctII$,
the \emph{product tree} $\ctIII$ is the minimal tree satisfying the
following conditions:
\begin{itemize}
  \item For every node $\pi\in\ctIII$ of the product tree,
    there are clusters $t,s,r\in\ctI$ with $\pi=(t,s,r)$.
  \item Let $t\in\ctI$ be the root of $\ctI$.
    Then $(t,t,t)$ is the root of $\ctIII$.
  \item A node $\pi=(t,s,r)\in\ctIII$ is a leaf if and only if
    $(t,s)\in\lfII$ or $(s,r)\in\lfII$.
    Otherwise, its sons are given by
    $\sons(\pi)=\sons(t)\times\sons(s)\times\sons(r)$.
\end{itemize}
\end{definition}
Due to this definition, $\pi=(t,s,r)\in\ctIII$ implies
$(t,s)\in\ctII$ and $(s,r)\in\ctII$.

Let $\pi=(t,s,r)\in\ctIII$.
If $(t,s)\in\lfII$, we can apply the estimates (\ref{eq:Wev_tree}) and
(\ref{eq:Wup_tree}) to obtain
\begin{equation*}
  \Wmm(t,s,r)
  \leq 2 \hat k^2 \kern-5pt \sum_{(s',r')\in\desc(s,r)} \kern-5pt
           (|\hat s|+|\hat r|)
       + 4 \Cup \hat k^2 \kern-5pt \sum_{(t',r')\in\desc(t,r)} \kern-5pt
           (|\hat t|+|\hat r|).
\end{equation*}
If $(s,r)\in\lfII$, we get
\begin{equation*}
  \Wmm(t,s,r)
  \leq 2 \hat k^2 \kern-5pt \sum_{(t',s')\in\desc(t,s)} \kern-5pt
           (|\hat t|+|\hat s|)
       + 4 \Cup \hat k^2 \kern-5pt \sum_{(t',r')\in\desc(t,r)} \kern-5pt
           (|\hat t|+|\hat r|).
\end{equation*}
Otherwise, we have
\begin{equation*}
  \Wmm(t,s,r)
  \leq \Cmg \hat k^2 (|\hat t|+|\hat r|)
    + \kern-5pt \sum_{\substack{t'\in\sons(t)\\ s'\in\sons(s)\\
                              r'\in\sons(r)}} \kern-5pt \Wmm(t',s',r').
\end{equation*}
In order to get rid of the recursion, we define the set of
descendants $\desc(t,s,r)$ for every triple $\pi=(t,s,r)\in\ctIII$
as before and obtain
\begin{subequations}\label{eq:Wmm_sums}
\begin{align}
  \Wmm(t,s,r)
  &\leq 2 \hat k^2 \kern-5pt \sum_{(t',s',r')\in\desc(t,s,r)}
          \sum_{(s'',r'')\in\desc(s',r')} \kern-5pt (|\hat s''|+|\hat r''|)
     \label{eq:Wmm_ev_sr}\\
  &\quad + 2 \hat k^2 \kern-5pt \sum_{(t',s',r')\in\desc(t,s,r)}
          \sum_{(t'',s'')\in\desc(t',s')} \kern-5pt (|\hat t''|+|\hat s''|)
     \label{eq:Wmm_ev_ts}\\
  &\quad + \Cup \hat k^2 \kern-5pt \sum_{(t',s',r')\in\desc(t,s,r)}
          \sum_{(t'',r'')\in\desc(t',r')} \kern-5pt (|\hat t''|+|\hat r''|)
     \label{eq:Wmm_up}\\
  &\quad + \Cmg \hat k^2 \kern-5pt \sum_{(t',s',r')\in\desc(t,s,r)}
          (|\hat t'|+|\hat r'|).
     \label{eq:Wmm_mg}
\end{align}
\end{subequations}
We can investigate each of these terms separately.
First we notice that Definition~\ref{de:cluster_tree} implies that
all index sets corresponding to clusters on the same level of $\ctI$
are disjoint, and we have
\begin{equation}\label{eq:cluster_bound}
  \sum_{t'\in\desc(t)} |\hat t'|
  = \sum_{\ell=0}^p
    \sum_{\substack{t'\in\desc(t)\\ \level(t')=\ell}} |\hat t'|
  \leq \sum_{\ell=0}^p |\hat t| = (p+1) |\hat t|
\end{equation}
for all $t\in\ctI$.
Next we notice that Definition~\ref{de:block_tree} and the sparsity
assumption (\ref{eq:Csp}) together with (\ref{eq:cluster_bound}) yield
\begin{subequations}\label{eq:block_bound}
\begin{align}
  \sum_{(t',s')\in\desc(t,s)} |\hat t'|
  &= \sum_{t'\in\desc(t)} \sum_{\substack{s'\in\desc(s)\\ (t',s')\in\ctII}}
        |\hat t'|
   \leq \Csp \sum_{t'\in\desc(t)} |\hat t'|
   \leq \Csp (p+1) |\hat t|,\\
  \sum_{(t',s')\in\desc(t,s)} |\hat s'|
  &= \sum_{s'\in\desc(s)} \sum_{\substack{t'\in\desc(t)\\ (t',s')\in\ctII}}
        |\hat s'|
   \leq \Csp \sum_{s'\in\desc(s)} |\hat s'|
   \leq \Csp (p+1) |\hat s|
\end{align}
\end{subequations}
for all $(t,s)\in\ctII$.
Finally we observe that Definition~\ref{de:product_tree} ensures
that $(t',s',r')\in\ctIII$ implies both $(t',s')\in\ctII$ and
$(s',r')\in\ctII$, so that we can again use the sparsity
assumption (\ref{eq:Csp}) together with (\ref{eq:block_bound}) to get
\begin{subequations}\label{eq:product_bound}
\begin{align}
  \sum_{(t',s',r')\in\desc(t,s,r)} |\hat t'|
  &\leq \sum_{(t',s')\in\desc(t,s)}
     \sum_{\substack{r'\in\ctI\\ (s',r')\in\ctII}} |\hat t'|
   \leq \Csp^2 (p+1) |\hat t|,\\
  \sum_{(t',s',r')\in\desc(t,s,r)} |\hat s'|
  &\leq \sum_{(t',s')\in\desc(t,s)}
     \sum_{\substack{r'\in\ctI\\ (s',r')\in\ctII}} |\hat s'|
   \leq \Csp^2 (p+1) |\hat s|,\\
  \sum_{(t',s',r')\in\desc(t,s,r)} |\hat r'|
  &\leq \sum_{(s',r')\in\desc(s,r)}
     \sum_{\substack{t'\in\ctI\\ (t',s')\in\ctII}} |\hat r'|
   \leq \Csp^2 (p+1) |\hat r|
\end{align}
\end{subequations}
for all $(t,s,r)\in\ctII$.
With these preliminary estimates at our disposal, we can consider
the sums appearing in (\ref{eq:Wmm_sums}).

For (\ref{eq:Wmm_ev_sr}), we can take advantage of the fact that
due to (\ref{eq:Csp}), for every $(s',r')\in\ctII$, there are at most
$\Csp$ clusters $t'\in\ctI$ such that $(t',s',r')\in\ctII$, so we find
\begin{align*}
  \sum_{(t',s',r')\in\desc(t,s,r)} &\sum_{(s'',r'')\in\desc(s',r')}
     (|\hat s''|+|\hat r''|)\\
  &\leq \Csp \sum_{(s',r')\in\desc(s,r)} \sum_{(s'',r'')\in\desc(s',r')}
     (|\hat s''|+|\hat r''|)\\
  &= \Csp \sum_{(s'',r'')\in\desc(s,r)}
     \sum_{\substack{(s',r')\in\desc(s,r)\\ (s'',r'')\in\desc(s',r')}}
     (|\hat s''|+|\hat r''|).
\end{align*}
By Definition~\ref{de:block_tree}, the depth of the block tree $\ctII$ is
bounded by the depth of the cluster tree $\ctI$, and therefore by the
constant $p$ introduced in our assumptions.
Since every block has at most one father, a block $(s'',r'')\in\desc(s,r)$
cannot have more than $p+1$ predecessors $(s',r')\in\desc(s,r)$,
and we can use (\ref{eq:block_bound}) to get
\begin{align*}
  \sum_{(t',s',r')\in\desc(t,s,r)} &\sum_{(s'',r'')\in\desc(s',r')}
     (|\hat s''|+|\hat r''|)\\
  &\leq \Csp (p+1) \sum_{(s'',r'')\in\desc(s,r)}
     (|\hat s''|+|\hat r''|)\\
  &\leq \Csp^2 (p+1)^2 (|\hat s| + |\hat r|).
\end{align*}
We can proceed in a similar manner for (\ref{eq:Wmm_ev_ts}) to
get
\begin{equation*}
  \sum_{(t',s',r')\in\desc(t,s,r)} \sum_{(s'',r'')\in\desc(s',r')}
     (|\hat s''|+|\hat r''|)
  \leq \Csp^2 (p+1)^2 (|\hat t| + |\hat s|).
\end{equation*}
The sum (\ref{eq:Wmm_mg}) can be handled using (\ref{eq:product_bound})
to get
\begin{equation*}
  \sum_{(t',s',r')\in\desc(t,s,r)} (|\hat t'| + |\hat r'|)
  \leq \Csp^2 (p+1) (|\hat t| + |\hat r|).
\end{equation*}
This leaves us with only (\ref{eq:Wmm_up}).
Here, we have to distinguish two cases:
if $(t',r')\in\ctII$, we can proceed as in the first two cases to
find
\begin{align*}
  \sum_{\substack{(t',s',r')\in\desc(t,s,r)\\ (t',r')\in\ctII}}
  &\sum_{(t'',r'')\in\desc(t',r')}
    (|\hat t''| + |\hat r''|)\\
  &\leq \Csp \sum_{(t',r')\in\desc(t,r)} \sum_{(t'',r'')\in\desc(t',r')}
    (|\hat t''| + |\hat r''|)\\
  &= \Csp \sum_{(t'',r'')\in\desc(t,r)}
     \sum_{\substack{(t',r')\in\desc(t,r)\\ (t'',r'')\in\desc(t',r')}}
    (|\hat t''| + |\hat r''|)\\
  &\leq \Csp (p+1) \sum_{(t'',r'')\in\desc(t,r)}
    (|\hat t''| + |\hat r''|)\\
  &\leq \Csp^2 (p+1)^2 (|\hat t| + |\hat r|),
\end{align*}
where we have used (\ref{eq:block_bound}) in the last step.
If $(t',r')\not\in\ctII$, i.e., if it has been necessary to split
an admissible leaf block temporarily, the definition of
$\desc(t',r')$ yields
\begin{align*}
  \sum_{\substack{(t',s',r')\in\desc(t,s,r)\\ (t',r')\not\in\ctII}}
  &\sum_{(t'',r'')\in\desc(t',r')}
    (|\hat t''| + |\hat r''|)\\
  &= \sum_{\substack{(t',s',r')\in\desc(t,s,r)\\ (t',r')\not\in\ctII}}
    (|\hat t'| + |\hat r'|)\\
  &\leq \sum_{(t',s',r')\in\desc(t,s,r)} (|\hat t'| + |\hat r'|)\\
  &\leq \Csp^2 (p+1) (|\hat t| + |\hat r|)
\end{align*}
using (\ref{eq:product_bound}) in the last step.
Collecting all parts of the sum (\ref{eq:Wmm_sums}) yields
our final result.

%
%
\begin{theorem}[Matrix multiplication]
Let $(t,s,r)\in\ctIII$.
We have
\begin{equation*}
  \Wmm(t,s,r)
  \leq \Cmm \Csp^2 (p+1)^2 \hat k^2 (|\hat t| + |\hat s| + |\hat r|)
\end{equation*}
with $\Cmm := 4 + 2 \Cup + \Cmg$.
\end{theorem}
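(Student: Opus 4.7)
The striking feature of this theorem is that almost all the analytical work has already been carried out in the paragraphs immediately preceding the statement: the author has unrolled the recursion in $\Wmm$ to obtain the four-term decomposition (\ref{eq:Wmm_sums}) and then bounded each of the four sums (\ref{eq:Wmm_ev_sr})--(\ref{eq:Wmm_mg}) individually. My plan is therefore simply to collect these four bounds, add them together, and track the constants.

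Concretely, I would begin by recalling that (\ref{eq:Wmm_sums}) follows from the leaf-case bounds for $\Wev$ and $\Wup$ via a straightforward structural induction on the product tree. The sums (\ref{eq:Wmm_ev_sr}) and (\ref{eq:Wmm_ev_ts}) are nested double sums over descendants in $\ctII$, and both are bounded by $\Csp^2 (p+1)^2$ times the appropriate cluster sizes: one swaps the order of summation so that the outer sum ranges over inner block descendants, uses the fact that a block in $\ctII$ has at most $p+1$ predecessors (since $\ctII$ has depth at most $p$ and each block has a unique parent), and then applies the sparsity estimate (\ref{eq:block_bound}). Sum (\ref{eq:Wmm_mg}) is even simpler and is controlled directly by the product-descendant bound (\ref{eq:product_bound}).

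The only genuinely delicate step is the treatment of (\ref{eq:Wmm_up}), where one must distinguish the case $(t',r')\in\ctII$ from $(t',r')\notin\ctII$. In the former case, the same order-swapping argument used for (\ref{eq:Wmm_ev_sr}) applies and yields a bound of $\Csp^2 (p+1)^2 (|\hat t|+|\hat r|)$. In the latter case, which arises when the algorithm must temporarily split an admissible leaf block, the convention $\desc(t',r')=\{(t',r')\}$ from Definition~\ref{de:product_tree}'s extension collapses the inner sum to a single term, so the contribution reduces to $\sum_{(t',s',r')\in\desc(t,s,r)} (|\hat t'|+|\hat r'|)$, which (\ref{eq:product_bound}) bounds by $\Csp^2(p+1)(|\hat t|+|\hat r|)$; this is absorbed into the larger $\Csp^2 (p+1)^2$ bound.

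Adding the four contributions and factoring out $\Csp^2 (p+1)^2 \hat k^2 (|\hat t|+|\hat s|+|\hat r|)$, the prefactors combine to $2+2+2\Cup+\Cmg = 4+2\Cup+\Cmg = \Cmm$, proving the claim. I do not anticipate any serious obstacle: the hardest part is really bookkeeping the indices correctly when swapping the outer and inner summations, and noting that the $(t',r')\notin\ctII$ branch is benign because the extended definition of $\desc$ trivializes the inner sum.
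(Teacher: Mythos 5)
Your proposal is correct and follows the paper's argument exactly: the four bounds on (\ref{eq:Wmm_ev_sr})--(\ref{eq:Wmm_mg}) derived immediately before the theorem are collected, the case split $(t',r')\in\ctII$ versus $(t',r')\notin\ctII$ for the update term gives $2\Cup$, and the prefactors add up to $\Cmm = 4 + 2\Cup + \Cmg$. One small mis-reference: the convention $\desc(t',r')=\{(t',r')\}$ for $(t',r')\notin\ctII$ comes from the inline definition of $\desc$ for block pairs, not from Definition~\ref{de:product_tree}.
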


\bibliographystyle{plain}
\bibliography{hmatrix}

\end{document}